\definecolor{Red}{cmyk}{0,1,1,0.2}
\def \N{\mathbb{N}}
\def \R{\mathbb{R}}
\def \E{\mathbb{E}}
\def \P{\mathbb{P}}
\newcommand{\X}{{\R^d}}
\newcommand{\bx}{\bar{x}}
\newcommand{\by}{\bar{y}}
\newcommand{\bt}{\bar{t}}
\newtheorem{definition}{Definition}[section] 
\theoremstyle{definition}
\theoremstyle{remark}
\newtheorem{remark}{Remark}[section]
\theoremstyle{plain}
\newtheorem{theorem}{Theorem}[section]
\newtheorem{lemma}{Lemma}[section]
\newtheorem{proposition}{Proposition}[section]
\newtheorem{example}{Example}[section]
\newtheorem{hp}{Hypotheses}[section]
\numberwithin{equation}{section}
\title{}
\newcommand{\xz}{\color{black}}
\date{}
\begin{document}
	\title{Random Time Dynamical Systems }
	\author{
	\textbf{Rossana Capuani}\\
 Department of Mathematics, University of Trento\\
 Trento, Italy\\
 Email: rossana.capuani@univr.it
\and
\textbf{Luca Di Persio}\\
 Department of Computer Science-Math.College, University of Verona\\
 I-37134 Verona, Italy\\
 Email: luca.dipersio@univr.it
\and
\textbf{Yuri Kondratiev}\\
 Department of Mathematics, University of Bielefeld\\
 D-33615 Bielefeld, Germany\\
 Dragomanov University, Kiev, Ukraine\\
 Email: kondrat@math.uni-bielefeld.de
\and
\textbf{Michele Ricciardi}\\
Department of Computer Science-Math.College, University of Verona\\
  I-37134 Verona, Italy\\
  Email: michele.ricciardi@univr.it
\and
\textbf{Jos{\'e} Lu{\'i}s
da Silva},\\
 CIMA, University of Madeira, Campus da Penteada\\
 9020-105 Funchal, Portugal\\
 Email: joses@staff.uma.pt
}

\date{\today}

\maketitle

\begin{abstract}
 In this paper, we introduce the concept of  random time changes in  dynamical systems. 
 The subordination
principle may be applied 
to study the long time behavior of the random time systems. We show,
under certain assumptions on the class of random time, that the subordinated
system exhibits a slower time decay which is determined by the random
time characteristics.  Along the path asymptotic, a random time change is reflected in
the new velocity of the resulting dynamics. \\

	\noindent \textit{Keywords}: Dynamical systems, random time change,
inverse subordinator, long time behavior \\

	\noindent \textbf{MSC Subject classifications}:  35R11, 37A50, 45M05,
 46N30, 60G20, 60G52. \\
\end{abstract}

	\section{Introduction}
	The idea to consider stochastic processes with general random times
is known at least from the classical book by Gikhman and Skorokhod
\cite{GS74}. In the case of Markov processes time changes by subordinators
has been already considered by Bochner in \cite{Bochner1962}, showing that
it gives again a Markov process, the so-called {\it Bochner subordinated Markov
process}. A more interesting scenario is realized when analyzing the case of inverse
subordinators. Indeed, after the time change, we fail to obtain a Markov
process. Therefore, the study of such kind of processes becomes really challenging. 
From this perspective, let us recall the  work
by Montroll and Weiss, \cite{Montroll1965}, where the authors
consider the physically motivated case of random walks in random time.
This seminal paper originated a wide research activity related to the study of Markov
processes with inverse stable subordinators as random times changes,
see the book \cite{Meerschaert2012} for a detailed review and historical
comments.

It is worth mentioning that when we take into account the case 
of processes with random time change which are not subordinators
or inverse subordinators, we can only rely on few results, the overall
field having been less investigated.

Indeed, on the one hand, additional assumptions on the stable subordinator 
considerably reduce the set of time change processes we can count on,
resulting in restrictive assumptions for possible applications. 
On the other hand, we find technical difficulties in
handling general inverse subordinators. Such limitations can be
overcome for certain sub-classes of inverse subordinators, see, e.g.,
\cite{KKS2018,KKdS19}.
Let us underline that the random time change approach turns to be a very effective
tools in modeling several physical systems, spanning from ecological to biological ones, see, e.g., 
\cite{Magdziarz2015} and references therein, also in view of additional applications.

There is a natural question concerning the use of a random
time change not only in stochastic dynamics but also with respect to a wider class
of dynamical problems. In this paper we focus  on the analysis
of latter task in the case of dynamical systems taking values in $\X$.
In particular, let $X(t,x)$, $t\ge0$, $x\in\mathbb{R}^{d}$ be a dynamical system
in $\mathbb{R}^{d}$, starting from $x$ at initial time, namely: $X(0,x)=x$. 
Of course, such a  system is also a deterministic Markov process. Given $f:\mathbb{R}^{d}\longrightarrow\mathbb{R}$
we define 
\[
u(t,x):=f(X(t,x))\,,
\]
hence obtaining a version of the Kolmogorov equation, called the
{\it Liouville equation} within the theory of dynamical systems: 
\[
\frac{\partial}{\partial t}u(t,x)=Lu(t,x)\,,
\]
$L$ being the generator of a semigroup which results to be
the solution of the Liouville equation, see, e.g., \cite{EN2000,RS75,Yosida80}
for more details. 

If $E(t)$ is an inverse subordinator process (see Section \ref{sec:RT-FA} below for details and examples),
then we may consider the time changed random dynamical systems 
\[
Y(t):=X(E(t))\,.
\]
Our aim is to analyze the properties of $Y(t)$ depending on those
of the initial dynamical systems $X(t)$. In particular, we can
define 
\[
v(t,x):=\E[f(Y(t,x)]\,,
\]
then trying to compare the behavior $u(t,x)$ and $v(t,x)$ for a certain
class of functions $f$.

In what follows, we  present the main problems which naturally
appear studying random time changes in dynamical
systems. Moreover, we provide solutions to these problems with respect to 
the examples collected is Section \ref{sec:RT-FA}. 

The rest of the paper is organized as follows. In Section \ref{sec:RT-FA}
we present the classes  of inverse subordinators and the associated general
fractional derivatives. In Section \ref{randomds} we  study random time dynamical systems, also
considering the simplest examples of them. Moreover, we also provide the first results
when the random time is associated to the $\alpha$-stable subordinator.
In Subsection \ref{sec:PGM} we consider a dynamical system as a deterministic
Markov processes, also introducing the notion of potential and Green measure
of the dynamical system. In Subsection \ref{sec:PT} we investigate
the path transformation of a simple dynamical system by a random time.
The last part of the work, namely  Section \ref{sec:RTTE},  is devoted to the analysis of transport equations
and random time changes in this important  class of equations.

		\section{Random times}
	\label{sec:RT-FA}
	In what follows, we recall some preliminary definitions and results related to 
	random times processes and subordinators.
	Let us start with the following fundamental definition
	\begin{definition}
	Let $(\Omega,\mathcal{F},\P)$ be a probability space.  A \emph{random time} is a process $E:[0,+\infty)\times\Omega\to\R^+$ such that
	\begin{itemize}
		\item[(i)]  for $a.e.$ $\omega\in\Omega$ $E(t,\omega)\ge0$ for all $t\in[0,+\infty)$,
		\item[(ii)] for $a.e.$ $\omega\in\Omega$  $E(0,\omega)=0$,
		\item[(iii)]the function $E(\cdot,\omega)$ is increasing and satisfies
		$$
		\lim\limits_{t\to +\infty}E(t,\omega)=+\infty\,.
		$$
	\end{itemize}
\end{definition}
	Concerning the concept of {\it subordinators}, we can introduce it as follows:
	\begin{definition}
		Let $(\Omega,\mathcal{F},\P)$ be a probability space. A process $\{S(t),\, t\ge0\}$ is a \emph{subordinator} if the following conditions are satisfied
		\begin{itemize}
			\item [(i)] $S(0)=0$; 
			\item[(ii)]$S(t+r)-S(t)$ has the same law of $S(r)$ for all $t,r>0\,$;
			\item [(iii)] if ${(\mathcal{F}_t)}_t$ denotes the filtration generated by ${(S(t))}_t$, i.e. $\mathcal{F}_t=\sigma(\{S(r), r\le t\})$, then $S(t+r)-S(t)$ is independent of $\mathcal{F}_t$ for all $t,r>0\,;$
			\item [(iv)] $t\to S(t)(\omega)$ is almost surely right-continuous with left limits;
			\item [(v)] $t\to S(t)$ is almost surely an increasing function.
		\end{itemize}
	\end{definition}
	\noindent
	For the sake of completeness, let us note that the process $S(\cdot)$ is a \emph{L\'{e}vy process} if it satisfies the conditions $(i)-(iv)$, see, e.g., \cite{Bertoin96} for more details. 
	Let $S=\{S(t),\;t\ge0\}$ be  L{\'e}vy process, then  its Laplace transform can be written in terms 
	of a Bernstein function (also known as Laplace exponent) $\Phi:[0,\infty)\longrightarrow[0,\infty)$ by 
	\[
	\mathbb{E}[e^{-\lambda S(t)}]=e^{-t\Phi(\lambda)},\quad\lambda\ge0\,.
	\]
	Moreover, the function $\Phi$ admits the representation 
	\begin{equation}
		\Phi(\lambda)=\int_{0}^{+\infty}(1-e^{-\lambda\tau})\,\mathrm{d}\sigma(\tau),\label{eq:Levy-Khintchine}
	\end{equation}
	where the measure $\sigma$, also called L{\'e}vy measure, has support
	in $[0,\infty)$ and fulfills 
	\begin{equation}
		\int_{0}^{+\infty}(1\wedge\tau)\,\mathrm{d}\sigma(\tau)<\infty\label{eq:Levy-condition}\,.
	\end{equation}
	Let $\sigma$ be a L{\'e}vy measure, we define the associated kernel $k$ as
	follows 
	\begin{align}
		k:(0,\infty)&\longrightarrow(0,\infty),\;\label{eq:k}\\
		t&\mapsto k(t):=\sigma\big((t,\infty)\big)\,.\nonumber
	\end{align}	
	Its Laplace transform is denoted by $\mathcal{K}$, and, for any	$\lambda\ge0$, one has 
	\begin{equation}
		\mathcal{K}(\lambda):=\int_{0}^{\infty}e^{-\lambda t}k(t)\,\mathrm{d}t\label{eq:LT-k}\,.
	\end{equation}
	We note that the relation between the function $\mathcal{K}$ and the Laplace exponent
	$\Phi$ is given by 
	\begin{equation}
		\Phi(\lambda)=\lambda\mathcal{K}(\lambda),\quad\forall\lambda\ge0\label{eq:Laplace-exponent}\,.
	\end{equation}
	Throughout the paper we shall suppose that 
	\begin{hp}\label{hp}
		\label{cond} Le $\Phi$ be a complete Bernstein function, that is, the L{\'e}vy
		measure $\sigma$ is absolutely continuous with respect to the Lebesgue
		measure. The functions $\mathcal{K}$ and $\Phi$ satisfy 
		\begin{equation*}
			\mathcal{K}(\lambda)\to\infty,\text{ as \ensuremath{\lambda\to0}};\quad\mathcal{K}(\lambda)\to0,\text{ as \ensuremath{\lambda\to\infty}};\label{eq:H1}
		\end{equation*}
		\begin{equation*}
			\Phi(\lambda)\to0,\text{ as \ensuremath{\lambda\to0}};\quad\Phi(\lambda)\to\infty,\text{ as \ensuremath{\lambda\to\infty}}.\label{eq:H2}
		\end{equation*}
	\end{hp}
	\begin{example}[$\alpha$-stable subordinator]
		\label{exa:alpha-stable1}A classical example of a subordinator $S$
		is the so-called $\alpha$-stable process with index $\alpha\in(0,1)$.
		In particular,
		a subordinator is $\alpha$-stable if its Laplace exponent
		is 
		\[
		\Phi(\lambda)=\lambda^{\alpha}=\frac{\alpha}{\Gamma(1-\alpha)}\int_{0}^{\infty}(1-e^{-\lambda\tau})\tau^{-1-\alpha}\,\mathrm{d}\tau\,,
		\]
		where $\Gamma$ is the gamma function.\\
		In this case, the associated  L{\'e}vy measure is given by $\mathrm{d}\sigma_{\alpha}(\tau)=\frac{\alpha}{\Gamma(1-\alpha)}\tau^{-(1+\alpha)}\,\mathrm{d}\tau$ and
		the corresponding kernel $k_{\alpha}$ has the form $$k_{\alpha}(t)=g_{1-\alpha}(t):=\frac{t^{-\alpha}}{\Gamma(1-\alpha)} \ \ \ \ \ t\ge0\,,$$
		with Laplace transform equal to $\mathcal{K}_{\alpha}(\lambda)=\lambda^{\alpha-1}$, for
		$\lambda\ge0$. 
	\end{example}
	
	\begin{example}[Gamma subordinator]
		\label{exa:gamma-subordinator}The Gamma process $Y^{(a,b)}$ with
		parameters $a,b>0$ is another example of a subordinator with Laplace
		exponent 
		\[
		\Phi_{(a,b)}(\lambda)=a\log\left(1+\frac{\lambda}{b}\right)=\int_{0}^{\infty}(1-e^{-\lambda\tau})a\tau^{-1}e^{-b\tau}\,\mathrm{d}\tau\,, 	\]
		the second equality being the Frullani integral. The associated L{\'e}vy
		measure is given by $d\sigma_{(a,b)}(\tau)=a\tau^{-1}e^{-b\tau}\,\mathrm{d}\tau$,
		with associated kernel equal to
		$$k_{(a,b)}(t)=a\Gamma(0,bt), \ \ t>0\,,
		$$
		where $$
		\Gamma(\nu,z):=\int_{z}^{\infty}e^{-t}t^{\nu-1}\,\mathrm{d}t$$
		is the incomplete gamma function, see, e.g., \cite[Section 8.3]{GR81} for more details. 
		Moreover, its 	Laplace transform is $$
		\mathcal{K}_{(a,b)}(\lambda)=a\lambda^{-1}\log\left(1+\frac{\lambda}{b}\right),  \ \  \ \ \ \lambda>0\,.$$
	\end{example}
	
	\begin{example}[Truncated $\alpha$-stable subordinator]
		\label{exa:truncated_stable_ sub}The truncated $\alpha$-stable
		subordinator, see \cite[Example 2.1-(ii)]{Chen2017}, $S_{\delta}$,
		$\delta>0$, constitutes an example of a driftless $\alpha$-stable subordinator with L{\'e}vy
		measure given by 
		\[
		\mathrm{d}\sigma_{\delta}(\tau):=\frac{\alpha}{\Gamma(1-\alpha)}\tau^{-(1+\alpha)}1\!\!1_{(0,\delta]}(\tau)\,\mathrm{d}\tau,\qquad\delta>0\,.
		\]
		The corresponding Laplace exponent is given by
		\[
		\Phi_{\delta}(\lambda)=\lambda^{\alpha}\left(1-\frac{\Gamma(-\alpha,\delta\lambda)}{\Gamma(-\alpha)}\right)+\frac{\delta^{-\alpha}}{\Gamma(1-\alpha)}\,,
		\]
		with associated kernel 
		\[
		k_{\delta}(t):=\sigma_{\delta}\big((t,\infty)\big)=\frac{1\!\!1_{(0,\delta]}(t)}{\Gamma(1-\beta)}(t^{-\beta}-\delta^{-\beta}),\;t>0\,.
		\]
	\end{example}
	
	\begin{example}[Sum of two alpha stable subordinators]
		\label{exa:sum-two-stables}Let $0<\alpha<\beta<1$ be given and let
		$S_{\alpha,\beta}(t)$, for $t\ge0$, be the driftless subordinator with Laplace
		exponent given by 
		\[
		\Phi_{\alpha,\beta}(\lambda)=\lambda^{\alpha}+\lambda^{\beta}\,.
		\]
		Then, by Example \ref{exa:alpha-stable1}, we have   that the corresponding L{\'e}vy measure
		$\sigma_{\alpha,\beta}$ is the sum of two L{\'e}vy measures. Indeed, it holds
		\[
		\mathrm{d}\sigma_{\alpha,\beta}(\tau)=\mathrm{d}\sigma_{\alpha}(\tau)+\mathrm{d}\sigma_{\alpha}(\tau)=\frac{\alpha}{\Gamma(1-\alpha)}\tau^{-(1+\alpha)}\,\mathrm{d}\tau+\frac{\beta}{\Gamma(1-\beta)}\tau^{-(1+\beta)}\,\mathrm{d}\tau\,,
		\]
		implying that  the associated kernel $k_{\alpha,\beta}$ reads as follow 
		\[
		k_{\alpha,\beta}(t):=g_{1-\alpha}(t)+g_{1-\beta}(t)=\frac{t^{-\alpha}}{\Gamma(1-\alpha)}+\frac{t^{-\beta}}{\Gamma(1-\beta)},\;t>0\,,
		\]
		with associated Laplace transform given by 
		$$\mathcal{K}_{\alpha,\beta}(\lambda)=\mathcal{K}_{\alpha}(\lambda)+\mathcal{K}_{\beta}(\lambda)=\lambda^{\alpha-1}+\lambda^{\beta-1}\,,\,
\ \ \		\lambda>0\,.$$ 
	\end{example}
	
	\begin{example}[Kernel with exponential weight]
		\label{exa:exponential-weight} Taking
		$\gamma>0$ and $0<\alpha<1$, let us consider the subordinator with Laplace
		exponent 
		\[
		\Phi_{\gamma}(\lambda):=(\lambda+\gamma)^{\alpha}=\left(\frac{\lambda+\gamma}{\lambda}\right)^{\alpha}\frac{\alpha}{\Gamma(1-\alpha)}\int_{0}^{\infty}(1-e^{-\lambda\tau})\tau^{-1-\alpha}\,\mathrm{d}\tau\,.
		\]
		Then the associated 
		L{\'e}vy measure is given by 
		$$\mathrm{d}\sigma_{\gamma}(\tau)=\left(\frac{\lambda+\gamma}{\lambda}\right)^{\alpha}\frac{\alpha}{\Gamma(1-\alpha)}\tau^{-(1+\alpha)}\mathrm{d}\tau\,,$$
		which implies a kernel $k_{\gamma}$ with exponential weight. In particular, we have
		\[
		k_{\gamma}(t)=g_{1-\alpha}(t)e^{-\gamma t}=\frac{t^{-\alpha}}{\Gamma(1-\alpha)}e^{-\gamma t}\,.
		\]
		The corresponding Laplace transform of $k_{\gamma}$ is then given by $\mathcal{K}_{\gamma}(\lambda)=\lambda^{-1}(\lambda+\gamma)^{\alpha}$,
		$\lambda>0$. 
	\end{example}

\subsection{Inverse subordinators and general fractional derivatives}\label{inversub}
In this section we introduce the inverse subordinators and the corresponding general fractional derivatives.
\begin{definition}
	Let $S(\cdot)$ be a subordinator. We define $E(\cdot)$ as the inverse process  of $S(\cdot)$, i.e.
	$$
	E(t):=\inf\left\{ r>0\, |\, S(r)> t \right\} =\sup\left\{ r\ge0\, |\, S( r)\le  t \right\}\, \ \ \mbox{for all} \ t \in[0,+\infty)\,.
	$$
\end{definition}
\noindent
For any $t\ge0$, we denote by $G_{t}^{k}(\tau):=G_{t}(\tau)$, $\tau\ge0$
the marginal density of $E(t)$ or, equivalently 
\begin{equation*}
	G_{t}(\tau)\,\mathrm{d}\tau=\frac{\partial}{\partial\tau}\P\xz(E(t)\le\tau)\,\mathrm{d}\tau=\frac{\partial}{\partial\tau}\P\xz(S(\tau)\ge t)\,\mathrm{d}\tau=-\frac{\partial}{\partial\tau}\P\xz(S(\tau)<t)\,\mathrm{d}\tau.
\end{equation*}
\begin{remark}
	\label{rem:distr-alphastab-E}If $S$ is the $\alpha$-stable process,
	$\alpha\in(0,1)$, then the inverse process $E(t)$ has Laplace transform, see \cite[Prop.~1(a)]{Bingham1971}, given by 
	\begin{equation}
		\mathbb{E}[e^{-\lambda E(t)}]=\int_{0}^{\infty}e^{-\lambda\tau}G_{t}(\tau)\,\mathrm{d}\tau=\sum_{n=0}^{\infty}\frac{(-\lambda t^{\alpha})^{n}}{\Gamma(n\alpha+1)}=E_{\alpha}(-\lambda t^{\alpha})\label{eq:Laplace-density-alpha} \,.
	\end{equation}
	By the asymptotic behavior of the Mittag-Leffler function
	$E_{\alpha}$, it follows that $\mathbb{E}[e^{-\lambda E(t)}]\sim Ct^{-\alpha}$
	as $t\to\infty$. Using the properties of the Mittag-Leffler function
	$E_{\alpha}$, we can show that the density $G_{t}(\tau)$ is given
	in terms of the Wright function $W_{\mu,\nu}$, namely $G_{t}(\tau)=t^{-\alpha}W_{-\alpha,1-\alpha}(\tau t^{-\alpha})$,
	see \cite{Gorenflo1999} for more details. 
\end{remark}
For a general subordinator, the following lemma determines the $t$-Laplace
transform of $G_{t}(\tau)$, with $k$ and $\mathcal{K}$ given in
\eqref{eq:k} and \eqref{eq:LT-k}, respectively. 
\begin{lemma}
	\label{lem:t-LT-G}The $t$-Laplace transform of the density $G_{t}(\tau)$
	is given by 
	\begin{equation}
		\int_{0}^{\infty}e^{-\lambda t}G_{t}(\tau)\,\mathrm{d}t=\mathcal{K}(\lambda)e^{-\tau\lambda\mathcal{K}(\lambda)}.\label{eq:LT-G-t}
	\end{equation}
	The double ($\tau,t$)-Laplace transform of $G_{t}(\tau)$ is 
	\begin{equation}
		\int_{0}^{\infty}\int_{0}^{\infty}e^{-p\tau}e^{-\lambda t}G_{t}(\tau)\,\mathrm{d}t\,\mathrm{d}\tau=\frac{\mathcal{K}(\lambda)}{\lambda\mathcal{K}(\lambda)+p}\,.\label{eq:double-Laplace}
	\end{equation}
\end{lemma}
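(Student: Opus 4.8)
The plan is to reduce both formulas to the single identity $\E[e^{-\lambda S(t)}]=e^{-t\Phi(\lambda)}$, the defining equivalence $E(t)\le\tau \Longleftrightarrow S(\tau)\ge t$, and the relation $\Phi(\lambda)=\lambda\mathcal{K}(\lambda)$ of \eqref{eq:Laplace-exponent}. Set $F(\tau,t):=\P(S(\tau)\ge t)$, so that, by that equivalence, $F(\tau,t)=\P(E(t)\le\tau)$ and hence $\partial_\tau F(\tau,t)=G_t(\tau)$ in the distributional sense in $\tau$. The first step is to compute the $t$-Laplace transform of $F$: writing $F(\tau,t)=\E[\mathbf{1}_{\{S(\tau)\ge t\}}]$ and applying Tonelli's theorem (the integrand is nonnegative),
\begin{equation*}
\int_0^\infty e^{-\lambda t}F(\tau,t)\,\mathrm{d}t=\E\!\left[\int_0^{S(\tau)}e^{-\lambda t}\,\mathrm{d}t\right]=\frac{1}{\lambda}\bigl(1-\E[e^{-\lambda S(\tau)}]\bigr)=\frac{1-e^{-\tau\Phi(\lambda)}}{\lambda}.
\end{equation*}

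Next I would pass from $F$ to $G$. The quickest route is to differentiate the last display in $\tau$ and interchange $\partial_\tau$ with $\int_0^\infty e^{-\lambda t}(\cdot)\,\mathrm{d}t$, which yields
\begin{equation*}
\int_0^\infty e^{-\lambda t}G_t(\tau)\,\mathrm{d}t=\partial_\tau\!\left(\frac{1-e^{-\tau\Phi(\lambda)}}{\lambda}\right)=\frac{\Phi(\lambda)}{\lambda}\,e^{-\tau\Phi(\lambda)}=\mathcal{K}(\lambda)\,e^{-\tau\lambda\mathcal{K}(\lambda)},
\end{equation*}
the last equality being \eqref{eq:Laplace-exponent}; this is \eqref{eq:LT-G-t}. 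A way to avoid differentiation is to use integration by parts in $\tau$: since $F(0,t)=\P(0\ge t)=0$ for $t>0$ and $F(\tau,t)\to1$ as $\tau\to\infty$, one has $\int_0^\infty e^{-p\tau}G_t(\tau)\,\mathrm{d}\tau=p\int_0^\infty e^{-p\tau}F(\tau,t)\,\mathrm{d}\tau$, and then, combining with the first step and Tonelli,
\begin{equation*}
\int_0^\infty\!\!\int_0^\infty e^{-p\tau}e^{-\lambda t}G_t(\tau)\,\mathrm{d}t\,\mathrm{d}\tau=\frac{p}{\lambda}\int_0^\infty e^{-p\tau}\bigl(1-e^{-\tau\Phi(\lambda)}\bigr)\,\mathrm{d}\tau=\frac{\Phi(\lambda)}{\lambda\bigl(p+\Phi(\lambda)\bigr)}=\frac{\mathcal{K}(\lambda)}{\lambda\mathcal{K}(\lambda)+p},
\end{equation*}
which is \eqref{eq:double-Laplace}; \eqref{eq:LT-G-t} then follows by uniqueness of Laplace transforms in $p$. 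Either order of the two steps works, and the double transform also follows directly from \eqref{eq:LT-G-t} by one further elementary $\tau$-integration.

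The only genuinely delicate point is the legitimacy of the two interchanges — differentiating under the integral sign (respectively, the integration by parts together with the Tonelli step) — for a general subordinator, where $G_t(\cdot)$ is only known to be the marginal density of $E(t)$. This is handled by the monotonicity and uniform boundedness of $\tau\mapsto F(\tau,t)$: the difference quotients $h^{-1}\bigl(F(\tau+h,t)-F(\tau,t)\bigr)$ are nonnegative and their $t$-integrals against $e^{-\lambda t}$ are controlled uniformly in $h$, so dominated convergence applies, and the boundary terms in the integration by parts vanish for the same reason. Under Hypotheses~\ref{hp} the function $\Phi$ (hence $\mathcal{K}$) is continuous with the stated limits, so all the Laplace integrals above converge for $\lambda>0$ and $p>0$ and the manipulations are justified.
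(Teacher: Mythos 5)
Your proof is correct, and it is genuinely different from what the paper does: the paper does not prove the lemma at all, but simply refers to Kochubei (2011) and to Lemma~3.1 of Toaldo (2015). Your argument is essentially the standard derivation underlying those references: the identity $\P(E(t)\le\tau)=\P(S(\tau)\ge t)$ (which is exactly what the paper's own definition of $G_t$ presupposes), Tonelli to compute $\int_0^\infty e^{-\lambda t}\P(S(\tau)\ge t)\,\mathrm{d}t=\lambda^{-1}\bigl(1-e^{-\tau\Phi(\lambda)}\bigr)$, and then $\Phi(\lambda)=\lambda\mathcal{K}(\lambda)$; both target formulas follow, and your check of the elementary $\tau$-integral giving $\Phi(\lambda)/\bigl(\lambda(p+\Phi(\lambda))\bigr)=\mathcal{K}(\lambda)/\bigl(\lambda\mathcal{K}(\lambda)+p\bigr)$ is right. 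What your route buys is a short self-contained proof in place of a citation; what the citation buys is that the references also settle the regularity of $G_t$ that your argument takes as given. One small remark on your "delicate point": the dominated-convergence argument for the difference quotients is the weakest link as stated (the exceptional set of Lebesgue points depends on $t$), but it is not needed — since $G_t$ is the density of $E(t)$ you have $F(\tau,t)=\int_0^\tau G_t(s)\,\mathrm{d}s$, and Tonelli gives directly
\begin{equation*}
\int_0^\infty e^{-\lambda t}F(\tau,t)\,\mathrm{d}t=\int_0^\tau\Bigl(\int_0^\infty e^{-\lambda t}G_t(s)\,\mathrm{d}t\Bigr)\mathrm{d}s=\frac{1-e^{-\tau\Phi(\lambda)}}{\lambda},
\end{equation*}
so \eqref{eq:LT-G-t} holds for a.e.\ $\tau$ by differentiation of the absolutely continuous right-hand side; equivalently, your second route (integration by parts plus uniqueness of the Laplace transform in $p$) already closes the gap, again with the identity understood a.e.\ in $\tau$, which is all that a statement about a density can assert. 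Also note the full strength of Hypotheses~\ref{hp} is not needed here; the computation works for any subordinator with Bernstein exponent $\Phi$ and $\lambda,p>0$.
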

\begin{proof}
	For the proof see \cite{Kochubei11} or  \cite[Lemma 3.1]{Toaldo2015}
\end{proof}
Let us now recall the definition of {\it General Fractional Derivative} (GFD) associated to a kernel $k$, see \cite{Kochubei11} and references therein for more details.
\begin{definition}
	Let $S$ be a subordinator and the kernel
	$k\in L_{\mathrm{loc}}^{1}(\mathbb{R}_{+})$ given in \eqref{eq:k}.
	We define a differential-convolution operator by 
	\begin{equation}
		\big(\mathbb{D}_{t}^{(k)}u\big)(t)=\frac{d}{dt}\int_{0}^{t}k(t-\tau)u(\tau)\,\mathrm{d}\tau-k(t)u(0),\;t>0\,.\label{eq:general-derivative}
	\end{equation}
\end{definition}
\begin{remark}
	The operator $\mathbb{D}_{t}^{(k)}$ is also known as \emph{Generalized
		Fractional Derivative.}
\end{remark}
\begin{example}[Distributed order derivative]
	\label{exa:distr-order-deriv}Consider the kernel $k$ defined by
	\begin{equation}
		k(t):=\int_{0}^{1}g_{\alpha}(t)\,\mathrm{d}\alpha=\int_{0}^{1}\frac{t^{\alpha-1}}{\Gamma(\alpha)}\,\mathrm{d}\alpha,\quad t>0\,.\label{eq:distributed-kernel}
	\end{equation}
	Then it is easy to see that 
	\[
	\mathcal{K}(\lambda)=\int_{0}^{\infty}e^{-\lambda t}k(t)\,\mathrm{d}t=\frac{\lambda-1}{\lambda\log(\lambda)},\quad\lambda>0\,.
	\]
	The corresponding differential-convolution operator $\mathbb{D}_{t}^{(k)}$
	is called distributed order derivative, see, e.g., \cite{Atanackovic2009,Daftardar-Gejji2008,Gorenflo2005,Hanyga2007,Kochubei2008,Meerschaert2006}
	for more details and applications. 
\end{example}
\noindent
We conclude this section with a result that will be useful later on, starting by recalling
the following definition.
\begin{definition}
	Given the functions $f:\R\to\R$ and $g:\R\to\R$, we say that $f$ and $g$ are \emph{asymptotically equivalent
	at infinity}, and denote $f\sim g$ as $x\to+\infty$, if
\[
\lim_{x\to+\infty}\frac{f(x)}{g(x)}=1\,.
\]
Moreover, we say that $f$ is slowly varying 
if 
\[
\lim_{x\to+\infty}\frac{f(\lambda x)}{f(x)}=1,\quad\mbox{for any }\lambda>0\,.
\]
\end{definition}
\noindent
For more details on slowly varying functions,  we refer the interested reader to, e.g., \cite{Feller71,Seneta1976}.\\
\begin{lemma}
	\label{KL} Suppose hypotheses \ref{hp} are satisfied, and that the subordinator $S(t)$, along with its inverse $E(t)$,
	$t\ge0$ , are such that 
	\begin{equation}
		\mathcal{K}(\lambda)\sim\lambda^{-\gamma}Q\left(\frac{1}{\lambda}\right),\quad\lambda\to0\label{eq:condition-K}\,,
	\end{equation}
	where $0\leq\gamma\leq1$ and $Q(\cdot)$ is a slowly varying function. Moreover, define 
	\[
	A(t,z):=\int_{0}^{\infty}e^{-z\tau}G_{t}(\tau)\,\mathrm{d}\tau,\quad t>0,\;z>0\,.
	\]
	Then it holds
	\[
	A(t,z)\sim\frac{1}{z}\frac{t^{\gamma-1}}{\Gamma(\gamma)}Q(t),\quad t\to\infty\,.
	\]
\end{lemma}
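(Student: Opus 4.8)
The plan is to compute the $t$-Laplace transform of $A(t,z)$ explicitly using Lemma~\ref{lem:t-LT-G}, identify its small-$\lambda$ behavior from the hypothesis \eqref{eq:condition-K}, and then invert the asymptotics by a Tauberian theorem of Karamata type. First I would apply Fubini's theorem together with \eqref{eq:LT-G-t} to get, for $\lambda>0$,
\[
\widetilde{A}(\lambda,z):=\int_0^\infty e^{-\lambda t}A(t,z)\,\mathrm{d}t=\int_0^\infty e^{-z\tau}\left(\int_0^\infty e^{-\lambda t}G_t(\tau)\,\mathrm{d}t\right)\mathrm{d}\tau=\int_0^\infty e^{-z\tau}\mathcal{K}(\lambda)e^{-\tau\lambda\mathcal{K}(\lambda)}\,\mathrm{d}\tau=\frac{\mathcal{K}(\lambda)}{z+\lambda\mathcal{K}(\lambda)}\,.
\]
(Equivalently, one reads this off directly from \eqref{eq:double-Laplace} with $p=z$.) This is the key closed-form identity that drives everything.

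Next I would extract the behavior as $\lambda\to0$. By Hypotheses~\ref{hp}, $\lambda\mathcal{K}(\lambda)=\Phi(\lambda)\to0$ as $\lambda\to0$, so the denominator $z+\lambda\mathcal{K}(\lambda)\to z$, and combining with \eqref{eq:condition-K} gives
\[
\widetilde{A}(\lambda,z)\sim\frac{1}{z}\,\mathcal{K}(\lambda)\sim\frac{1}{z}\,\lambda^{-\gamma}Q\!\left(\frac{1}{\lambda}\right),\quad\lambda\to0\,.
\]
Since $Q$ is slowly varying, $\lambda\mapsto\lambda^{-\gamma}Q(1/\lambda)$ is regularly varying of index $-\gamma$ at $0$ (one should note $0\le\gamma\le1$ keeps us in the admissible range for the Tauberian theorem, with the boundary case $\gamma=1$ handled identically since $\Gamma(1)=1$). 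Now I would invoke the Karamata Tauberian theorem (see, e.g., \cite{Feller71}, or \cite{Seneta1976}): if a nonnegative, eventually monotone function has Laplace transform $\sim \lambda^{-\gamma}\ell(1/\lambda)$ as $\lambda\to0$ with $\ell$ slowly varying, then the function itself is $\sim t^{\gamma-1}\ell(t)/\Gamma(\gamma)$ as $t\to\infty$. Applying this with $\ell=Q/z$ yields
\[
A(t,z)\sim\frac{1}{z}\,\frac{t^{\gamma-1}}{\Gamma(\gamma)}\,Q(t),\quad t\to\infty\,,
\]
which is the claim.

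The main obstacle, and the point requiring care, is verifying the hypotheses of the Tauberian theorem — specifically that for fixed $z>0$ the map $t\mapsto A(t,z)$ is (eventually) monotone in $t$, or at least ultimately of bounded variation, so that the Tauberian converse applies rather than just the (trivial) Abelian direction. Here I would use that $A(t,z)=\mathbb{E}[e^{-zE(t)}]$ and that $E(t)$ is almost surely nondecreasing in $t$ (property (iii) of a random time / the definition of the inverse subordinator), hence $e^{-zE(t)}$ is nonincreasing in $t$ pathwise, so $A(\cdot,z)$ is nonincreasing; monotonicity is therefore automatic and the Tauberian theorem applies without the de Haan--type side conditions. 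A secondary technical point is justifying the asymptotic equivalence $\widetilde A(\lambda,z)\sim \frac1z\mathcal K(\lambda)$ uniformly enough to feed into the Tauberian statement — but since the correction factor $z/(z+\Phi(\lambda))\to1$ is a genuine limit and $\mathcal{K}(\lambda)\to\infty$ as $\lambda\to0$ (again by Hypotheses~\ref{hp}), the product of a convergent factor with a regularly varying one is regularly varying with the same index, so no uniformity beyond a plain limit is needed.
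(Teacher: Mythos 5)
Your argument is correct, but note that the paper does not actually prove Lemma \ref{KL}: it simply cites \cite[Theorem 4.3]{KocKon2017}. What you have written is a self-contained proof along the standard lines that the cited reference itself follows: the closed-form identity $\int_0^\infty e^{-\lambda t}A(t,z)\,\mathrm{d}t=\mathcal{K}(\lambda)/\bigl(z+\lambda\mathcal{K}(\lambda)\bigr)$ (read off from \eqref{eq:double-Laplace} with $p=z$), the observation that $\lambda\mathcal{K}(\lambda)=\Phi(\lambda)\to0$ so the transform is $\sim\frac1z\lambda^{-\gamma}Q(1/\lambda)$ as $\lambda\to0$, and then Karamata's Tauberian theorem combined with monotone density, the needed monotonicity being supplied exactly as you say by $A(t,z)=\E[e^{-zE(t)}]$ with $E(\cdot)$ pathwise nondecreasing. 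So your proposal buys a proof where the paper defers to the literature, and each step you use (Lemma \ref{lem:t-LT-G}, Hypotheses \ref{hp}, the definition of the inverse subordinator) is available in the paper. One small caveat: the Feller--Karamata statement you invoke requires the index to be strictly positive, so your argument covers $0<\gamma\le1$; at the boundary $\gamma=0$ the asserted asymptotics is degenerate anyway (since $\Gamma(0)=\infty$) and would need a separate interpretation, so it is worth flagging that your proof, like the standard Tauberian theorem, addresses the nondegenerate range.
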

\begin{proof}
	For the proof see  \cite[Theorem 4.3]{KocKon2017}.
\end{proof}
\begin{remark}
	We point out that the condition \eqref{eq:condition-K} on the Laplace
	transform of the kernel $k$ is satisfied by all Examples \ref{exa:alpha-stable1}--\ref{exa:exponential-weight}
	and \ref{exa:distr-order-deriv}, stated above. The case of Example \ref{exa:sum-two-stables}
	is easily checked as 
	\[
	\mathcal{K}(\lambda)=\lambda^{\alpha}+\lambda^{\beta}=\lambda^{-(1-\alpha)}(1+\lambda^{-(\alpha-\beta)})=\lambda^{-\gamma}Q\left(\frac{1}{\lambda}\right)\,,
	\]
	where $\gamma=1-\alpha>0$ and $Q(t)=1+t^{\alpha-\beta}$ is a slowly
	varying function. 
	
\end{remark}

	\subsection{Compound Poisson Process}
	A significant example of subordinator is given by the \emph{Compound Poisson Process} (CPP). Roughly speaking, a CPP is a jump (stochastic) process, whose both  jumps size  and the number of them, are independent random variables.\\
	First, we define a random process $N(t)$ modeling the number of jumps that occurred in given time interval $[0,t]$, $t>0$.
	
	\begin{definition}
		Let  $(\Omega,\mathcal{F},\P)$  be a probability space.	A random process $N:[0,+\infty)\times\Omega\to\N\cup\{0\}$ is a {discrete Poisson process} with rate $\lambda>0$ if it satisfies the following properties
		\begin{itemize}
			\item[(i)]  $N(0)=0\,;$
			\item[(ii)] $\forall t,s\in [0,+\infty)$ such that $t>s$ one has that $N(t)-N(s)$ is independent of $N(s)\,;$
			\item [(iii)] the random variable $N(t)-N(s)$ has a Poisson distribution with parameter $\lambda(t-s)\,.$
		\end{itemize}
	\end{definition}
	\begin{remark}
		The property (iii) implies that $N(\cdot,\omega)$ is increasing for {\it almost all} $\omega\in\Omega$, namely
		$$\P\{\omega \in \Omega: N(\cdot,\omega) {\text{ is not increasing}} \}=0\,.$$
		Moreover
		$$
		\lim\limits_{t\to+\infty}N(t,\omega)=+\infty\,,\,\mbox{for a.a. }\omega\in\Omega\,.
		$$
	\end{remark}
	A CPP is defined as follows.
	\begin{definition}
		Let $N(\cdot)$ be a discrete Poisson process with rate $\lambda$, then $S(\cdot)$ is said to be a CPP of rate $\lambda>0$ if it admits the following representation
		\begin{equation*}
			S(t)=\sum\limits_{i=1}^{N(t)} R_i\,,
		\end{equation*}
		where $\{R_i\}_i$ are non-zero and non-negative i.i.d. random variables independent of $N(\cdot)\,$.
	\end{definition}
	\noindent
	It is straightforward to note that a CPP is also a random time process.
	\begin{remark}
		The random variables $R_i$ represent the \emph{jumps} of the process $S$, while $N(t)$ is the number of jumps occurred in $[0,t]$. Moreover, for each $\omega\in\Omega$, $N(\cdot,\omega)$ is represented by an increasing step function.
	\end{remark}
	It is well known that the moment generating function (or Laplace transform) of a CPP of parameter $\lambda$ is given by
	$$
	L_{S(t)}(s):=\E[e^{sS(t)}]=e^{\lambda t(L_R(s)-1)}\,,
	$$
	 $L_R(\cdot)$ being  the moment generating function of the random variables $R_i\,.$
	This results holds true for all $s$ in the domain of $L_R$.\\
	According to the definitions given in \eqref{eq:Levy-Khintchine}-\eqref{eq:Laplace-exponent}, for a CPP we have:
	\begin{itemize}
		\item the Laplace exponent is given by
		$$
		\Phi(s)=\lambda(1-L_R(-s))=\lambda\E[1-e^{-sR}]=\lambda\int_0^{+\infty}(1-e^{-sr})\,\mathrm{d}F_R(r)\,,
		$$
		where $F_R(\cdot)$ is the cumulative distribution function of $R$;
		\item the associated L\'evy measure and kernel are respectively given by
		$$
		\sigma((a,b))=\lambda\,\P(a\le R\le b)\;,\; k(t)=\sigma((t,+\infty))=\lambda\,\P(R\ge t)\,,
		$$
		while the Laplace transform of $k$ reads as follow
		$$
		\mathcal{K}(s)=\frac{\Phi(s)}s=\frac{\lambda(1-L_R(-s))}{s}\,.
		$$
	\end{itemize}
	
	Let us note that in this latter case Hypotheses \ref{hp} are not satisfied, since $\Phi(s)\to \lambda$ when $s\to+\infty\,.$
	\section{Random time dynamical systems}\label{randomds}

	\subsection{Dynamical systems and Liouville equations}	\label{sec:RTD}
	There is a natural question concerning the use of a random time change not only in stochastic dynamics, but more generally in an ample
	class of dynamical problems. In what follows, we shall focus the attention on the analysis of the {\it random time change approach}
		for dynamical systems taking values in $\R^d$.\\
Let $X(t,x)$, $t \geq0$ be a dynamical system in $\R^d$ such that $X(0, x) = x \in \R^d$. Such a system is also a deterministic Markov process. Therefore, given $f : \R^d \rightarrow \R$, and defining
	$$
	u(t,x) := f(X(t,x))\,,
	$$
	we have a version of the Kolmogorov equation, which is nothing but the Liouville equation within the theory of dynamical systems. Indeed, 
	\begin{equation}\label{uode}
	u_t(t,x)= Lu(t,x)\,,
	\end{equation}
	where $L$ is the generator of the semigroup solution of the Liouville equation, see, e.g., \cite{EN2000,RS75,Yosida80}. 
	\subsection{Random time changes and fractional Liouville equations}
Let  $(\Omega,\mathcal{F},\P)$ be a probability space. Let $X(t,x)$, $t \geq0$, be a dynamical system in $\R^d$ starting at time $t=0$ from $x\in\R^d$. Given an inverse subordinator process $E(\cdot)$,
	we consider the time changed random dynamical systems
	$$
	Y(t,\omega;x)=X(E(t,\omega);x)\,,\qquad t\in [0,+\infty), \ x \in\X , \  \omega \in \Omega\,.
	$$
	For a suitable $f:\R^d\rightarrow \R$ we define
	\begin{equation}
		v(t,x):=\E[f(Y(t;x))]\,,
	\end{equation}
	where, without loss of generality, with $E(t)$ and $Y(t;x)$ we shortly refer to $E(t,\cdot)$, resp. to $Y(t,\cdot \ ;x).$	\\
	As pointed out in, e.g., \cite{Chen2017,Toaldo2015}, $v(t,x)$ solves an evolution equation with the generator $L$,  with generalized fractional derivative (see \eqref{eq:general-derivative}), i.e.
	\begin{equation}
		\label{fde-for-sub-sol}
		\mathbb{D}_{t}^{(k)}v( \cdot,x)(t)=Lv(t,x)\,.
	\end{equation}
Let $u(t,x)$ be the solution to \eqref{uode} with the same generator $L$ in \eqref{fde-for-sub-sol}. Under quite general assumptions there is an essentially obvious relation between these evolutions
	\begin{equation}
		v(t,x)=\int_{0}^{\infty}u(\tau,x)G_{t}(\tau)\,\mathrm{d}\tau,\label{SUBFOR}
	\end{equation}
$G_{t}(\tau)$ being the density of $E(t)$, as defined in Section \ref{inversub}.

Having in mind the analysis of the random time change influence on the asymptotic properties of $v(t, x)$, we may suppose that the latter formula gives all necessary technical equipments. Unfortunately, the situation is essentially more complicated. In fact, the knowledge we have of the properties characterizing the density $G_t(\tau)$ is, in general, very poor.  
The aim of this section is to describe a class of subordinators for which we may obtain information about the time asymptotic of the generalized fractional dynamics.
	
	\subsection{First examples}\label{fex}
	We consider
	the simplest evolution equation in $\X$
	\[
	{\mathrm{d}}X(t)=v\mathrm{d}t\in\X,\quad X(0)=x_{0}\in\X\,,
	\]
	with corresponding dynamics given by 
	\[
	X(t)=x_{0}+vt,\quad t\geq0\,.
	\]
	Without loss of generality, let us assume that $x_{0}=0$. Then, we take $f(x)=e^{-\alpha|x|},\;\alpha>0$.
	Hence, the corresponding solution to the Liouville equation is 
	\[
	u(t,x)=e^{-\alpha t|v|},\quad t\geq0\,.
	\]
	
	\begin{proposition}
		Assume that the assumptions
		of Lemma \ref{KL} are satisfied. Then
		\[
		v(t,x)\sim\frac{1}{\alpha|v|\Gamma(\gamma)}t^{\gamma-1}Q(t),\quad t\to\infty\,.
		\]
		
	\end{proposition}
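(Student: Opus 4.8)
The plan is to recognize $v(t,x)$ as exactly the quantity $A(t,z)$ introduced in Lemma \ref{KL} and then to invoke that lemma verbatim. The only thing the proof really has to do is identify the right value of $z$.

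First I would record the explicit form of the data. With $x_{0}=0$ the dynamics is $X(t)=vt$ (here we tacitly assume $v\neq 0$, otherwise the statement is vacuous), so that the solution of the Liouville equation \eqref{uode} is $u(t,x)=f(X(t))=e^{-\alpha t|v|}$. Since $f$ is bounded and continuous, $u(\cdot,x)$ is bounded, and $G_{t}(\cdot)$ is a probability density, so the subordination formula \eqref{SUBFOR} applies and the integral below converges absolutely:
\[
v(t,x)=\int_{0}^{\infty}u(\tau,x)\,G_{t}(\tau)\,\mathrm{d}\tau=\int_{0}^{\infty}e^{-\alpha|v|\tau}\,G_{t}(\tau)\,\mathrm{d}\tau .
\]
Comparing with the definition of $A(t,z)$ in Lemma \ref{KL}, this is precisely $A(t,z)$ evaluated at $z=\alpha|v|>0$.

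It then remains only to apply Lemma \ref{KL}, whose hypotheses we have assumed: with $z=\alpha|v|$,
\[
v(t,x)=A\big(t,\alpha|v|\big)\sim\frac{1}{\alpha|v|}\,\frac{t^{\gamma-1}}{\Gamma(\gamma)}\,Q(t),\qquad t\to\infty ,
\]
which is the asserted asymptotics after collecting the constants into $\dfrac{1}{\alpha|v|\,\Gamma(\gamma)}$.

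There is no genuine obstacle here: all the analytic content — ultimately a Karamata/Tauberian argument, see \cite[Theorem~4.3]{KocKon2017} — is packaged inside Lemma \ref{KL}, and the role of the explicit example is merely to exhibit a natural $f$ (the exponential $e^{-\alpha|x|}$) for which $u(t,x)$ is itself a pure exponential in $t$, so that $v(t,x)$ collapses to $A(t,\alpha|v|)$. The only points worth a sentence of care are the nondegeneracy $v\neq 0$ (ensuring $z>0$ and that the leading term is not identically zero) and the legitimacy of interchanging expectation with the representation \eqref{SUBFOR}, which is immediate from boundedness of $f$.
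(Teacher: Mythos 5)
Your proof is correct and follows essentially the same route as the paper: rewrite $v(t,x)$ via the subordination formula \eqref{SUBFOR} as $A(t,\alpha|v|)$ and apply Lemma \ref{KL}. The extra remarks about $v\neq 0$ and the boundedness of $f$ justifying the representation are fine and only make explicit what the paper leaves implicit.
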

	
	\begin{proof}
		
		From the explicit form of the solution $u(t,x)$, and
		using both (\ref{SUBFOR}) and Lemma \ref{KL}, we obtain 
		\[
		v(t,x)\sim\frac{1}{\alpha|v|\Gamma(\gamma)}t^{\gamma-1}Q(t),\quad t\to\infty\,.
		\]
		In particular, for the $\alpha$-stable subordinator considered in
		Example \ref{exa:alpha-stable1}, we obtain $v(t,x)\sim Ct^{-\alpha}$,
		for a given constant $C>0$. Therefore, starting with a solution $u(t,x)$
		with exponential decay after subordination, we observe a polynomial
		decay with the order defined by the random time characteristics. 
	\end{proof}
\noindent
	For $d=1$ consider the dynamics
	\[
	\beta{\mathrm{d}}X(t)=\frac{1}{X^{\beta-1}(t)}\mathrm{d}t,\quad\beta\geq1\,,
	\]
	then the solution is given by 
	\[
	X(t)=(t+C)^{1/\beta}\,.
	\]
	Considering the function $f(x)=\exp(-a|x|^{\beta})$, $a>0$, and supposing that the assumptions of Lemma \ref{KL} are satisfied, then, exploiting the explicit form of the solution $u(t,x)$, we have that the long time behavior of the subordination $v(t,x)$ is given by 
	\[
	v(t,x)\sim\frac{e^{-aC}}{a}\frac{t^{\gamma-1}}{\Gamma(\gamma)}Q(t),\quad t\to\infty\,.
	\]
	In particular, choosing the density $G_{t}(\tau)$ of the inverse
	subordinator $E(t)$ as in the Example \ref{exa:sum-two-stables},
	we obtain
	\[
	v(t,x)\sim Ct^{-\alpha}(1+t^{\alpha-\beta})\sim Ct^{-\alpha},\quad t\to\infty\,.
	\]

	\subsection{Green measures} 	\label{sec:PGM}

The notion of potential is a classical topic within  the theory
of Markov processes, see, e.g., \cite{Blumenthal1968}. Recently, it
has been  proposed the concept of Green measure as a representation of potentials in an integral form, see \cite{KdS2020}. The modification of these
concepts for time changed Markov processes was investigated in \cite{KdS20}. Considering a dynamical system as a deterministic Markov processes,
we have the possibility to study the notion of potential and Green measure in this context.\\
According with the above stated framework, given a function $f:\X\to\R$, we consider the solution
to the Cauchy problem 
\begin{align*}
	\begin{cases}
		u_t(t,x)\!\!\!&=Lu(t,x),\\
		u(0,x)\!\!\!&=f(x),
	\end{cases}
\end{align*}
obtaining  
\[
u(t,x)=(e^{tL}f)(x)\,.
\]
Then, by defining the potential for the function $f$ as 
\[
U(f,x):=\int_{0}^{\infty}u(t,x)\,\mathrm{d}t=\int_{0}^{\infty}(e^{tL}f)(x)\,\mathrm{d}t=-(L^{-1}f)(x), \quad x\in\X\,,
\]
the existence of $U(f,x)$ is not clear at all. Indeed, it depends on the
class of functions $f$ and the Liouville generator $L$. Assuming
the existence of $U(f,x)$ we aim at obtaining an integral representation 
\begin{equation}
	\label{eq:potential-U}
	U(f,x)=\int_{\X}f(y)\,\mathrm{d}\mu^{x}(y)\,,
\end{equation}
$\mu^{x}$ being a random  measure on $\X$, that we will call
the Green measure of our dynamical system. As in the case of Markov
processes, the definition of the potential is easy to introduce but
difficult to analyse for each particular model. Moreover,  on the base of specific  examples, we 
may assume that the potentials are well  defined for
special classes  of functions $f$. Nevertheless, we can not expect  the existence of an associated Green
measure. 

As already seen, after a random time change we will have the subordinated solution
$v(t,x)$ for the fractional equation, see equation \eqref{fde-for-sub-sol}. 
Then we can try  to {\it re}-define the potential 
\[
V(f,x):=\int_{0}^{\infty}v(t,x)\,\mathrm{d}t, \quad x\in\X\,,
\]
which turns to be divergent for general random times.
Indeed, by the subordination formula \eqref{SUBFOR}
and the Fubini theorem, we have
\[
V(f,x)=\int_{0}^{\infty}\int_{0}^{\infty}u(\tau,x)G_{t}(\tau)\,\mathrm{d}\tau\,\mathrm{d}t
=\int_{0}^{\infty}u(\tau,x)\left(\int_{0}^{\infty}G_{t}(\tau)\,\mathrm{d}t\right)\mathrm{d}t\,\mathrm{d}\tau\,,
\]
where the inner integral is not convergent because of the equality \eqref{eq:LT-G-t} together with the Hypotheses~\ref{eq:H1}.
To overcome  this difficulty we may use the notion of renormalized potential. 
In particular,  inspired by the time change of Markov processes (see \cite{KdS20} for details), we define
the renormalized potential 
\begin{equation}
	V_r(f,x):=\lim_{t\to\infty}\frac{1}{N(t)}\int_0^tv(s,x)\,\mathrm{d}s, \quad t\ge0\,,
\end{equation}
where  $N(t)$ is defined by $N(t):=\int_0^tk(s)\,\mathrm{d}s$.
Then by assuming the existence of $U(f,x)$, we have
\[
V_r(f,x)=\int_{0}^{\infty}u(t,x)\,\mathrm{d}t\,.
\]

	\subsection{Path transformations}
	
	\label{sec:PT} Let us now investigate how the trajectories
	of dynamical systems transform under random times. 
	According to what seen above, we consider the Liouville
	equation for 
	\[
	u(t,x):=f(X(t,x)),\quad t\ge0,\;x\in\X\,,
	\]
	that is, 
	\[
	u_t(t,x)=Lu(t,x),\quad u(0,x)=f(x)\,,
	\]
	$L$ being the generator of a semigroup. In addition, let $E(t)$,
	$t\ge0$, be the inverse subordinator process. Then we can consider
	the time changed random dynamical systems 
	\[
	Y(t,x)=X(E(t),x),\quad t\ge0,\;x\in\X\,,
	\]
	where, without loss of generality, $E(t)$, resp. $Y(t;x)$, shortly refer to $E(t,\cdot)$, resp. to $Y(t,\cdot \ ;x)\,.$
	Definining 
	\[
	v(t,x):=\E[f(Y(t,x)]\,,
	\]
	by the subordination formula, we have 
	\[
	v(t,x)=\int_{0}^{\infty}u(\tau,x)G_{t}(\tau)\,\mathrm{d}\tau\,.
	\]
	Considering the vector-function $f:\R^d\rightarrow \R$ defined as
	\[
	f(x)=x\,,
	\]
	we have that the average trajectories of $Y(t,x)$ read as follow
	\[
	\E[Y(t,x)]= \int_{0}^{\infty}X(\tau,x)G_{t}(\tau)\,\mathrm{d}\tau\,.
	\]
	Then considering the dynamical system of Section \ref{fex}, namely $X(t,x)=vt$,  we obtain 
	\[
	\mathbb{E}[Y(t,x)]=v\int_{0}^{\infty}\tau  G_{t}(\tau)\,\mathrm{d}\tau\,.
	\]
	Therefore, we need to know the first moment of the density $G_t$. Considering the case of the inverse $\alpha$-stable subordinator stated in Example \ref{exa:alpha-stable1}, we have
	$$
	\int_0^\infty \tau G_t(\tau)\,\mathrm{d}\tau = C t^{\alpha}\,.
	$$
	Therefore, the asymptotic of the time changed trajectory will be slower (proportional to $t^{\alpha}$) instead of initial linear $vt$ motion. In a forthcoming paper we will study in detail these results  for other classes  of inverse subordinators. 
	\section{Random time transport equations} 	\label{sec:RTTE}
	
	Let $b(\cdot):\R^d\to\R^d$, $d\ge 1$, be a bounded continuous vector field. We consider the following dynamical system
	\begin{equation}\label{dyn}
		\begin{cases}
			\mathrm{d}X(t;x)=b(X(t;x))\,\mathrm{d}t\,,\\
			X(0;x)=x\,,
		\end{cases}
	\end{equation}
	with starting point  $x\in\R^d$, at initial time $t=0$. Let us consider a bounded continuous function $f:\R^d\to\R$ and define
	\begin{equation}\label{u}
		u(t,x):=f(X(t;x))\,,
	\end{equation}
	where $X$ is defined by \eqref{dyn}.\\
	In what follows, we prove that $u$ is a classical solution of the  first-order parabolic equation, provided  $b$ and $f$ are regular enough.
	\begin{proposition}\label{r1}
		Let $b$ and $f$ be bounded continuous functions. Then $u$, defined by \eqref{u}, is a classical solution of the following first-order parabolic equation 
		\begin{equation}\label{pde}
			\begin{cases}
				u_t(t,x)=b(x)\cdot \nabla u(t,x)\,,    \forall (t,x) \in(0,+\infty)\times\R^d, \\
				u(0,x)=f(x),\, \forall x\in \R^d\,.
			\end{cases}
		\end{equation}
	\end{proposition}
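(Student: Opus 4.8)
The plan is to prove the statement by the method of characteristics, reading \eqref{dyn} as the characteristic system associated with \eqref{pde}. The only genuinely delicate point is regularity: as it stands, mere continuity of $b$ guarantees (via Peano's theorem, together with the boundedness of $b$ to rule out blow-up) global existence of characteristics, but neither their uniqueness nor the differentiability of the flow $x\mapsto X(t;x)$, and the latter is exactly what is needed to even speak of a classical solution. So the first step is to record that, under the implicit regularity hypothesis on $b$ (it suffices that $b\in C^{1}(\X;\X)$, or that $b$ be locally Lipschitz with $C^{1}$ flow, and correspondingly $f\in C^{1}$), the Cauchy problem \eqref{dyn} has a unique solution for every $x\in\X$, defined for all $t\ge0$, and the flow map $x\mapsto X(t;x)$ is of class $C^{1}$ with $\partial_t X(t;x)=b(X(t;x))$ jointly continuous in $(t,x)$. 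Consequently $u(t,x)=f(X(t;x))$ defined by \eqref{u} is of class $C^{1}$ in $(t,x)$, so that asking it to solve \eqref{pde} in the classical sense is meaningful. The initial condition is immediate: $u(0,x)=f(X(0;x))=f(x)$.

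Next I would compute the two sides of \eqref{pde}. Differentiating \eqref{u} in $t$ and using \eqref{dyn},
\[
u_t(t,x)=\nabla f(X(t;x))\cdot\partial_t X(t;x)=\nabla f(X(t;x))\cdot b(X(t;x)).
\]
Differentiating \eqref{u} in $x$ gives $\nabla u(t,x)=\bigl(D_xX(t;x)\bigr)^{T}\nabla f(X(t;x))$, where $D_xX(t;x)$ is the Jacobian of the flow, hence
\[
b(x)\cdot\nabla u(t,x)=\bigl(D_xX(t;x)\,b(x)\bigr)\cdot\nabla f(X(t;x)).
\]
Comparing the two displays, the proposition reduces to the identity
\[
D_xX(t;x)\,b(x)=b(X(t;x)),\qquad t\ge0,\ x\in\X,
\]
that is: the Jacobian of the flow transports the initial velocity to the current velocity.

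To prove this identity I would argue by uniqueness for a linear ODE. Set $J(t):=D_xX(t;x)$; differentiating \eqref{dyn} in $x$ gives $\dot J(t)=Db(X(t;x))J(t)$ with $J(0)=I$, so $z(t):=J(t)b(x)$ solves $\dot z(t)=Db(X(t;x))z(t)$, $z(0)=b(x)$. On the other hand $w(t):=b(X(t;x))$ satisfies, again by the chain rule and \eqref{dyn}, $\dot w(t)=Db(X(t;x))\,\partial_t X(t;x)=Db(X(t;x))w(t)$, with $w(0)=b(x)$. Since $z$ and $w$ solve the same linear ODE with the same datum, $z\equiv w$, which is the claimed identity; plugging it into the two displays above yields $u_t(t,x)=b(x)\cdot\nabla u(t,x)$ for all $(t,x)\in(0,\infty)\times\X$, and the initial condition has already been checked. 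An alternative that avoids differentiating the flow twice is to start from the flow property $X(t+s;x)=X(t;X(s;x))$, so that $u(t+s,x)=u(t,X(s;x))$, and differentiate in $s$ at $s=0$: the left-hand side gives $u_t(t,x)$, the right-hand side gives $\nabla u(t,x)\cdot\partial_s X(s;x)\big|_{s=0}=b(x)\cdot\nabla u(t,x)$; this is shorter but needs the same $C^{1}$ dependence of the flow on $x$. The main obstacle, as flagged, is purely this regularity bookkeeping needed to make the flow $C^{1}$ — the algebra of characteristics itself is routine once that is in place.
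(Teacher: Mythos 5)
Your proposal is correct and follows essentially the same route as the paper: reduce the claim to the identity $\mathrm{Jac}_x X(t;x)\,b(x)=b(X(t;x))$ and establish it via uniqueness for the linear (variational) ODE — the paper writes the difference $\varphi$ and shows it solves $\varphi_t=\mathrm{Jac}\,b(X)\varphi$ with $\varphi(0,\cdot)=0$, while you equivalently show the two sides solve the same linear ODE with the same datum. Your explicit caveat that mere continuity of $b,f$ is insufficient and that $C^1$ regularity is implicitly required matches the paper's own practice, since its proof differentiates $f$ and the flow and invokes smoothness of $b$, and the remark following the proof defers the merely continuous case to viscosity solutions.
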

	\begin{remark}
		In case of non-autonomous drift, i.e. $b=b(t,x)$, the equation \eqref{pde} fails to be true, as shown
		by  the following counter-example. If $d=1$ and
		$$
		f(x)=x\,,\qquad b(t,x)=t+x\,,
		$$
		the solution of \eqref{dyn} is $u(t,x)=X(t;x)=(1+x)e^t-t-1\,.$ Then, one has
		$$
		u_t(t,x)=(1+x)e^t-1\,,\qquad b(t,x) u_x(t,x)=(t+x)e^t\,,
		$$
		and the equation \eqref{pde} is not satisfied.
	\end{remark}

	\begin{proof}[Proof of Proposition \ref{r1}]
		By assumptions on $b$ and $f$, we have
		\begin{align*}
			&u_t(t,x)=\nabla f(X(t;x))\cdot b(X(t;x))\,,\\
			&b(x)\cdot\nabla u(t,x)=b(x)\cdot\left(\nabla f(X(t;x))\mathrm{Jac}_x(X(t;x))  \right)\,.
		\end{align*}
		To prove \eqref{pde}, we have to check
		$$
		b(X(t;x))=\mathrm{Jac}_x(X(t;x))b(x)\,,
		$$
		equivalently
		$$
		\varphi(t,x):=b(X(t;x))-\mathrm{Jac}_x(X(t;x))b(x)=0\qquad\forall\, t\ge0,\,x\in\R^d\,.
		$$
		Computing the time derivative of $\varphi$, we obtain
		$$
		\varphi_t(t,x)=\mathrm{Jac}\,b(X(t;x))b(X(t;x))-\partial_t\mathrm{Jac}_x(X(t;x))b(x)\,.
		$$
		By differentiating equation \eqref{dyn} with respect to $x$, we have
		\begin{equation*}
			\begin{cases}
				\partial_t\mathrm{Jac}_x(X(t;x))=\mathrm{Jac}\,b(X(t;x))\mathrm{Jac}_x(X(t;x))
				\,,\\
				\mathrm{Jac}_x(X(0;x))=I_{d\times d}\,,
			\end{cases}
		\end{equation*}
		and the derivative $\varphi_t$ becomes
		$$
		\varphi_t(t,x)=\mathrm{Jac}\,b(X(t;x))(b(X(t;x))-\mathrm{Jac}_x(X(t;x))b(x))=\mathrm{Jac}\,b(X(t;x))\varphi(t,x)\,.
		$$
		Thus, $\varphi$ satisfies the following \textit{ODE} in time
		\begin{equation}\label{ODE}
			\begin{cases}
				\varphi_t(t,x)=\alpha(t,x)\varphi(t,x)\,,\\
				\varphi(0,x)=0\,,
			\end{cases}
		\end{equation}
		where $\alpha(t,x):=\mathrm{Jac}\,b(X(t;x))$. Since $b$ is smooth then $\alpha$ is locally Lipschitz, and \ref{ODE} admits the unique solution $\varphi=0$,
		completing the proof.
	\end{proof}
	\noindent
	Let us note that if $b$ and $f$ are continuous functions,  previous computations fail to be true, and the equation \eqref{pde} has to be understood in the {\it viscosity sense}, see below.
	\subsection{Viscosity solutions}
	 For the sake of completeness, let us introduce some notations that we will use throughout this section.
	 We indicate with  $USC([0,+\infty)\times\R^d)$  the space of upper semicontinuous functions on $[0,+\infty)\times\R^d$, while we use $LSC([0,+\infty)\times\R^d)$ for the space of lower semicontinuous functions on $[0,+\infty)\times\R^d$.\\
	 Given a function  $g: \R^d\rightarrow \R$, we say that
	 \begin{itemize}
	 	\item $g$ satisfies the H\"{o}lder condition if the following holds
	 	$$
	 	|g(x)-g(y)|\le C|x-y|^\beta\,,
	 	$$
	 	for $0<\beta\le 1$, $C>0$.
	 	\item$g$ is a Lipschitz function with sublinear growth if
	 	$$
	 	|g(x)|\le C(1+|x|^\theta)\,,\qquad |g(x)-g(y)|\le C|x-y|\,,
	 	$$
	 	for $C>0$, $0<\theta<1$;
	 	\item  $g$ is a continuous decreasing function if  $\forall x,y\in\R$ one has
	 	$$
		\langle{g(x)-g(y),x-y}\rangle\le0\;
	 	$$
	 	\end{itemize}
	 	Let $u:(0,+\infty)\times \mathbb{R}^d\rightarrow	\R$ be a continuous function. We define $\mathcal{D}^{1,+} u(t_0,x_0)$ the superdifferential of $u$ at $(t_0,x_0)$, i.e. the set of all points $(a,p)\in\R\times\R^d$ such that for $(t,x)\to (t_0,x_0)$ we have
	 	$$
	 	u(t,x)\le u(t_0,x_0) + a(t-t_0) + \langle p,x-x_0\rangle + o(|t-t_0|+|x-x_0|)\,.
	 	$$
	 	Moreover, we define $\mathcal{D}^{1,-} u(t_0,x_0)$ the subdifferential of $u$ at $(t_0,x_0)$, i.e. the set of all points $(a,p)\in\R\times\R^d$ such that for $(t,x)\to (t_0,x_0)$ we have
	 	$$
	 	u(t,x)\ge u(t_0,x_0) + a(t-t_0) + \langle p,x-x_0\rangle + o(|t-t_0|+|x-x_0|)\,.
	 	$$
Furthermore, we recall the definition of viscosity solutions given in \cite{lions}.
\begin{definition}\label{d1}
	\begin{itemize}
		\item[(i)] Let  $u\in USC([0,+\infty)\times\R^d)$ and be a bounded function from above. We say that $u$ is a viscosity subsolution of \eqref{pde} in $(0,+\infty)\times \R^d$ if 
		\begin{align*}
			\begin{cases}
				&\varphi_t(t,x)-b(x)\cdot\nabla\varphi(t,x)\le 0\,, \\
				&u(0,x)\le f(x),\ \  \ \ \ \  \ \ \ \ \forall x\in\R^d,
			\end{cases}
		\end{align*}
		for any $\varphi\in C^1 (\R^{d+1})$ such that $\varphi-u$ has a (strict) minimum value at $(t,x)\in (0,+\infty)\times \R^d$.
		\item[(ii)] Let $u\in LSC([0,+\infty)\times\R^d)$ and be a bounded function from below. We say that $u$ is a viscosity supersolution of \eqref{pde} in $(0,\infty)\times \R^d$ if 
		
		\begin{align*}
			\begin{cases}
				&	\varphi_t(t,x)-b(x)\cdot\nabla\varphi(t,x)\ge 0\,,\\
				&u(0,x)\ge f(x),\ \  \ \ \ \  \ \ \ \ \forall x\in\R^d,
			\end{cases}
		\end{align*}
		for any $\varphi\in C^1 (\R^{d+1})$ such that $\varphi-u$ has a (strict) maximum value at $(t,x)\in (0,+\infty)\times \R^d$.
		
		\item[(iii)] Let $u$ be a bounded continuous function. We say that $u$ is a viscosity solution of \eqref{pde} in $(0,+\infty)\times\R^d$ if it is both a subsolution and supersolution.
	\end{itemize}
\end{definition}
\begin{remark}
	As it is well know, Def. \ref{d1} can be expressed in terms of subdifferential and superdifferential, i.e.,
	\begin{align*}
		&a-b(x)\cdot p\le 0\,  \ \ \  \ \forall (t,x)\in(0,+\infty)\times\R^d, \ \forall (a,p)\in\mathcal{D}^{1,+} u(t,x), \\
		&a-b(x)\cdot p\ge 0\, \ \ \ \ \forall (t,x)\in(0,+\infty)\times\R^d, \ \forall (a,p)\in\mathcal{D}^{1,-} u(t,x).
	\end{align*}
\end{remark}

	\subsection{Existence and uniqueness results}
	To prove both existence and uniqueness of a solution $u$ as in \eqref{u}, we need to show the following
	\begin{theorem}[Comparison Principle]\label{seiscocciante}
		Let $u$, resp. $v$, be a subsolution, resp. a supersolution, of \eqref{pde}.
		Suppose that:
		\begin{enumerate}
			\item $f$ is bounded and satisfies the H\"{o}lder condition;
			\item at least one of the following conditions is satisfied
			\begin{itemize}
				\item [(i)] $b$ is a Lipschitz function with sublinear growth,
				\item [(ii)] $b$ is a continuous decreasing function.
			\end{itemize}
		\end{enumerate}
		Then $u(t,x)\le v(t,x)$ for all $(t,x)\in[0,+\infty)\times\R^d$.
	\end{theorem}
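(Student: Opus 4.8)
The plan is to establish the comparison principle by the classical doubling-of-variables method for first-order Hamilton--Jacobi equations (see, e.g., \cite{lions}), here applied to the operator $\partial_{t}-b(x)\cdot\nabla$. Fix $T>0$ and argue by contradiction: assume $\sigma:=\sup_{[0,T]\times\X}(u-v)>0$ and choose $(\hat t,\hat x)$ with $u(\hat t,\hat x)-v(\hat t,\hat x)>\sigma/2$. For parameters $\varepsilon,\eta,\delta>0$, with $\eta$ then $\delta$ fixed small (depending on $\sigma$ and $(\hat t,\hat x)$), introduce the auxiliary function
\[
\Phi(t,s,x,y):=u(t,x)-v(s,y)-\frac{|x-y|^{2}}{2\varepsilon}-\frac{(t-s)^{2}}{2\varepsilon}-\frac{\eta}{T-t}-\delta\langle x\rangle-\delta\langle y\rangle,
\]
where $\langle z\rangle:=(1+|z|^{2})^{1/2}\in C^{1}(\X)$ satisfies $|\nabla\langle z\rangle|\le1$. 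Since $u$ is bounded above and upper semicontinuous while $v$ is bounded below and lower semicontinuous, $\Phi$ is upper semicontinuous and tends to $-\infty$ as $|x|+|y|\to\infty$ or $t\to T^{-}$ (the weights $-\delta\langle x\rangle-\delta\langle y\rangle$ and $-\eta/(T-t)$ being essential for this); hence $\Phi$ attains a maximum at some $(\bar t,\bar s,\bar x,\bar y)$ depending on the parameters, with $M_{\varepsilon}:=\Phi(\bar t,\bar s,\bar x,\bar y)\ge\Phi(\hat t,\hat t,\hat x,\hat x)>\sigma/4$ for the chosen $\eta,\delta$.

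Next I would invoke the standard penalization estimates: comparing $M_{\varepsilon}$ with $\Phi(\bar t,\bar t,\bar x,\bar x)$ and using the boundedness of $u,v$, one gets that $\delta(\langle\bar x\rangle+\langle\bar y\rangle)$ stays bounded and
\[
\frac{|\bar x-\bar y|^{2}}{\varepsilon}+\frac{(\bar t-\bar s)^{2}}{\varepsilon}\longrightarrow0\qquad\text{as }\varepsilon\to0\ \ (\eta,\delta\text{ fixed}),
\]
so that, along a subsequence, $\bar x,\bar y\to x^{*}$ with $x^{*}$ in a bounded set. If $\bar t=0$ along this subsequence, then $\bar s\to0$ as well, and by the semicontinuity of $u,v$ together with $u(0,\cdot)\le f\le v(0,\cdot)$ one obtains $\limsup M_{\varepsilon}\le u(0,x^{*})-v(0,x^{*})\le f(x^{*})-f(x^{*})=0$, contradicting $M_{\varepsilon}>\sigma/4$; the H\"older regularity of $f$ in assumption~1 is what quantifies the behaviour of the remaining penalization terms in this initial layer. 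Hence, for $\varepsilon$ small, $\bar t>0$ and --- since $(\bar t-\bar s)^{2}/\varepsilon\to0$ --- also $\bar s>0$.

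With $\bar t,\bar s>0$ I would freeze variables: $(t,x)\mapsto u(t,x)$ is touched from above at $(\bar t,\bar x)$ by the $C^{1}$ function coming from the penalization, and $(s,y)\mapsto v(s,y)$ from below at $(\bar s,\bar y)$ by the corresponding one, each modified by a quartic term to make the extremum strict without altering its first-order jet. The definitions of viscosity sub- and supersolution then give
\[
\Big(\tfrac{\bar t-\bar s}{\varepsilon}+\tfrac{\eta}{(T-\bar t)^{2}}\Big)-b(\bar x)\cdot\Big(\tfrac{\bar x-\bar y}{\varepsilon}+\delta\tfrac{\bar x}{\langle\bar x\rangle}\Big)\le0,\qquad \tfrac{\bar t-\bar s}{\varepsilon}-b(\bar y)\cdot\Big(\tfrac{\bar x-\bar y}{\varepsilon}-\delta\tfrac{\bar y}{\langle\bar y\rangle}\Big)\ge0,
\]
and subtracting,
\[
\frac{\eta}{T^{2}}\le\frac{\eta}{(T-\bar t)^{2}}\le\big(b(\bar x)-b(\bar y)\big)\cdot\frac{\bar x-\bar y}{\varepsilon}+\delta\Big(b(\bar x)\cdot\tfrac{\bar x}{\langle\bar x\rangle}+b(\bar y)\cdot\tfrac{\bar y}{\langle\bar y\rangle}\Big).
\]
The last term is at most $2\delta\|b\|_{\infty}$ since $b$ is bounded. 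For the first term I would use assumption~2: under (i) the Lipschitz bound gives $\big|(b(\bar x)-b(\bar y))\cdot(\bar x-\bar y)/\varepsilon\big|\le L\,|\bar x-\bar y|^{2}/\varepsilon\to0$; under (ii) the monotonicity gives $\langle b(\bar x)-b(\bar y),\bar x-\bar y\rangle\le0$, so that term is non-positive. Thus $\eta/T^{2}\le o(1)+2\delta\|b\|_{\infty}$ as $\varepsilon\to0$; letting then $\delta\to0$ with $\eta$ fixed yields $\eta/T^{2}\le0$, a contradiction. Hence $\sigma\le0$, i.e.\ $u\le v$ on $[0,T]\times\X$, and since $T>0$ is arbitrary, $u\le v$ on $[0,+\infty)\times\X$.

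I expect the main obstacle to be the control of the Hamiltonian cross term $(b(\bar x)-b(\bar y))\cdot(\bar x-\bar y)/\varepsilon$: for a merely bounded continuous $b$ the crude bound $2\|b\|_{\infty}|\bar x-\bar y|/\varepsilon$ diverges, so one genuinely needs one of the two structural hypotheses of assumption~2 --- the Lipschitz-with-sublinear-growth case, turning the term into an $O(|\bar x-\bar y|^{2}/\varepsilon)$ quantity, or the monotone case, giving it a favourable sign. The other, more technical, delicate point is ruling out that the doubled maximum concentrates at the initial time, which requires combining the semicontinuity of $u$ and $v$, the initial inequalities, and the H\"older modulus of $f$.
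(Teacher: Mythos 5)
Your proof is correct, and while it is the same classical doubling-of-variables comparison argument by contradiction, the implementation differs from the paper's in a way worth noting. The paper first rescales $u\mapsto e^{-\lambda t}u$ so that \eqref{pde} acquires a zeroth-order term $\lambda u$, doubles only the space variable with the penalization $\frac{\alpha}{2}|x-y|^2+\frac1\alpha|x|^2+\nu t$, and, since time is not doubled, must invoke the parabolic Crandall--Ishii--Lions lemma (\cite[Theorem 8.3]{crandall}) to split the time derivatives as $a+c=\nu$; the contradiction is then obtained by taking $\lambda$ large, so that $\lambda\frac{\alpha}{2}|\bx-\by|^2$ absorbs $C\alpha|\bx-\by|^2$ and only the crude bound $|\bx-\by|\to0$ is needed. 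You instead double both time and space, localize with $\delta\langle x\rangle+\delta\langle y\rangle$, and generate strict positivity through the term $\eta/(T-t)$; this keeps the test functions explicit and avoids the jet lemma altogether, but in the Lipschitz case (i) it forces you to use the refined penalization estimate $|\bx-\by|^2/\varepsilon\to0$ (Lemma 3.1 of \cite{crandall}), since the crude bound $|\bx-\by|^2/\varepsilon\le C$ alone would not yield a contradiction -- this lemma should be quoted explicitly rather than summarized as ``standard penalization estimates''. Two minor points to tighten: the exclusion of $\bar s=0$ does not follow from ``$\bar t>0$ and $(\bar t-\bar s)^2/\varepsilon\to0$''; you need the symmetric initial-layer argument (if $\bar s=0$ along a subsequence, then $\bar t\to0$ too, and semicontinuity together with $u(0,\cdot)\le f\le v(0,\cdot)$ again gives $\limsup M_\varepsilon\le0$, a contradiction). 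Also choose $(\hat t,\hat x)$ with $\hat t<T$ (or fix $T$ after the point), otherwise $\eta/(T-\hat t)$ ruins the lower bound $M_\varepsilon>\sigma/4$. Finally, your bound $2\delta\|b\|_\infty$ uses the standing assumption of Section \ref{sec:RTTE} that $b$ is bounded; if one only assumes sublinear growth as in (i), the same term is still negligible as $\delta\to0$ because $\delta\langle\bx\rangle$ is bounded and $\theta<1$, which parallels the paper's use of $|\bx|^{1+\theta}/\alpha\to0$.
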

	\noindent
	\begin{remark}
		The proof is based on the ideas developed in \cite{crandall}. However, the parabolic case in the unbounded domain $\R^d$ was not developed there.  For the convenience of the reader we include the proof of Theroem \ref{seiscocciante} within the Appendix \ref{A1}.
	\end{remark}
	\noindent
	\begin{theorem}
		Suppose that $f$ and $b$ satisfies hypotheses of Thorem \ref{seiscocciante}.
		Then there exists a unique viscosity solution $u$ of the problem \eqref{pde}. Moreover, $u$ admits the following representation formula
		\begin{equation}\label{repru}
			u(t,x)=f(X(t;x))\,,
		\end{equation}
		where $X(t;x)$ solves \eqref{dyn}.
	\end{theorem}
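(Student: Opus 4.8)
The plan is to obtain uniqueness immediately from the Comparison Principle (Theorem~\ref{seiscocciante}) and to establish existence by checking that the explicit candidate $u(t,x):=f(X(t;x))$ is a viscosity solution of \eqref{pde}.

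First I would record the well-posedness of the flow $X(t;x)$ of \eqref{dyn}. Since $b$ is bounded and continuous, Peano's theorem gives local solutions and the bound on $b$ rules out blow-up, so the flow is global in time. Uniqueness of the flow is exactly where the dichotomy of Theorem~\ref{seiscocciante} enters: under (i) it is the classical Cauchy--Lipschitz theorem, while under (ii) one uses that $\langle b(x)-b(y),x-y\rangle\le0$ is a one-sided Lipschitz (dissipative) condition, so that two solutions $X_1,X_2$ issued from the same datum satisfy $\frac{d}{dt}|X_1(t)-X_2(t)|^2=2\langle b(X_1)-b(X_2),X_1-X_2\rangle\le0$, forcing $X_1\equiv X_2$; the same computation gives $|X(t;x)-X(t;y)|\le|x-y|$, hence joint continuity of $(t,x)\mapsto X(t;x)$. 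Consequently $u(t,x)=f(X(t;x))$ is well defined, bounded by $\|f\|_\infty$, continuous, and satisfies $u(0,x)=f(x)$ exactly.

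The structural fact that drives the viscosity verification is the semigroup identity $X(t-s;X(s;x))=X(t;x)$ for $0\le s\le t$ (autonomy together with uniqueness of the flow), which yields
\[
u(t-s,X(s;x))=f\bigl(X(t-s;X(s;x))\bigr)=f(X(t;x))=u(t,x),\qquad 0\le s\le t.
\]
To show $u$ is a subsolution, let $\varphi\in C^1(\R^{d+1})$ be such that $\varphi-u$ has a (strict) minimum at $(t_0,x_0)$ with $t_0>0$. For small $\varepsilon>0$ one has $\varphi(t_0-\varepsilon,X(\varepsilon;x_0))-u(t_0-\varepsilon,X(\varepsilon;x_0))\ge\varphi(t_0,x_0)-u(t_0,x_0)$, and by the identity above the $u$-terms cancel, so $\varphi(t_0-\varepsilon,X(\varepsilon;x_0))\ge\varphi(t_0,x_0)$. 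Since $X(\varepsilon;x_0)=x_0+\varepsilon b(x_0)+o(\varepsilon)$ by continuity of $b$, a first-order Taylor expansion of $\varphi$ gives
\[
\lim_{\varepsilon\to0^+}\frac{\varphi(t_0-\varepsilon,X(\varepsilon;x_0))-\varphi(t_0,x_0)}{\varepsilon}=-\varphi_t(t_0,x_0)+b(x_0)\cdot\nabla\varphi(t_0,x_0),
\]
and since every difference quotient on the left is nonnegative, $\varphi_t(t_0,x_0)-b(x_0)\cdot\nabla\varphi(t_0,x_0)\le0$. The supersolution inequality is the symmetric statement, obtained with test functions touching $u$ from below and the reversed inequalities. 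Hence $u$ given by \eqref{repru} is a viscosity solution.

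For uniqueness, suppose $w_1,w_2$ are both viscosity solutions; then $w_1$ is a subsolution and $w_2$ a supersolution, bounded since they are continuous solutions with bounded datum, so Theorem~\ref{seiscocciante} gives $w_1\le w_2$, and exchanging their roles gives $w_2\le w_1$, whence $w_1=w_2$. In particular the function in \eqref{repru} is the unique viscosity solution. I expect the main obstacle to lie in the low-regularity regime of case (ii): justifying uniqueness and continuous dependence of the flow purely from the one-sided Lipschitz bound, and ensuring the Taylor-expansion step is legitimate with merely continuous $b$, since the classical differentiation of Proposition~\ref{r1} is unavailable there — it is precisely the semigroup identity $u(t-s,X(s;x))=u(t,x)$ that allows the verification to bypass any differentiability of $b$.
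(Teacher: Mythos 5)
Your proposal is correct, but it takes a genuinely different route from the paper's proof. The paper establishes existence by regularization: it approximates $b$ and $f$ by smooth $b_n,f_n$ with uniform Lipschitz/H\"older constants, invokes Proposition \ref{r1} to get classical (hence viscosity) solutions $u_n=f_n(X_n(t;x))$, proves uniform local bounds and equicontinuity for $X_n$ and $u_n$ (Gronwall under (i), the monotonicity of $b_n$ under (ii)), extracts locally uniform limits by Ascoli--Arzel\`a, and then passes to the limit in the viscosity inequalities via a coercive perturbation $\psi$ of the test function so that the minimum points of $\psi-u_n$ converge to the touching point; the representation \eqref{repru} is inherited from $u_n=f_n(X_n)$ in the limit. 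You instead verify directly that $u(t,x)=f(X(t;x))$ is a viscosity solution of \eqref{pde}: forward well-posedness of the flow of \eqref{dyn} (Cauchy--Lipschitz under (i), the dissipativity estimate under (ii)) gives the semigroup identity $u(t-s,X(s;x))=u(t,x)$, and the sub- and supersolution inequalities then follow from a first-order expansion of the test function along the characteristic; uniqueness is the same comparison argument in both proofs. Your route is more elementary and self-contained --- it avoids the approximation and stability machinery entirely, and it makes explicit a point the paper leaves implicit, namely that under (ii) the flow is forward-unique so that the formula \eqref{repru} is unambiguous --- whereas the paper's route yields uniform local H\"older estimates on $u$ as a by-product and reuses Proposition \ref{r1}. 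The steps you flag as potential obstacles are in fact sound as you sketched them: the expansion $X(\varepsilon;x_0)=x_0+\varepsilon b(x_0)+o(\varepsilon)$ and the $o(\varepsilon)$ control of the Taylor remainder use only continuity of $b$ and the bound $|X(\varepsilon;x_0)-x_0|\le C\varepsilon$, so no differentiability of $b$ is required.
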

	\begin{proof}
		The uniqueness directly follows from Thereom \ref{seiscocciante}. Concerning the existence, if $f$ and $b$ are smooth function, then the existence and the representation formula have been already proved in Propositon \ref{r1}.\\
		In the general case, let ${\{f_n\}}_n$ and ${\{b_n\}}_n$ be two approximating sequences  of smooth functions, locally uniformly converging to $f$ and $b$. Without loss of generality, we can suppose that $b_n$ and $f_n$ are respectively locally and globally bounded uniformly in $n$, and satisfy the same assumptions of $f$ and $b$ with  H\"{o}lder and Lipschitz constants bounded uniformly in $n$.
		
		Let $X_n$ be the solution of \eqref{dyn} associated to $b_n$. Let $u_n$ be the solution of \eqref{pde} with $f_n$ and 
		fix $K\subset\subset [0,+\infty)\times\R^d$. Since $b_n$ and $f_n$ are locally bounded uniformly in $n$, we have that for all $(t,x)\in K$
		\begin{align}
			&|X_n(t;x)|\le |x|+\int_0^t\left|b_n(X_n(s;x))\right|\,\mathrm{d}s\le C_K\,,\label{x1}\\
			&|u_n(t,x)|=|f_n(X_n(t;x))|\le C_K\,,
		\end{align}
		for a positive constant $C_K$.\\
		Now we want to prove the Lipschitz bounds for $X_n$ and the H\"{o}lder bounds for $u_n$.\\
		Using the estimate in \eqref{x1}, and recalling that $X_n$ is solution of \eqref{dyn} we have 
		\begin{align*}
			|X_n(t,x)-X_n(s,x)|\le\int_s^t|b_n(X_n(r;x))|\,\mathrm{d}r\le C_K|t-s|
		\end{align*}
		for any $(t,x), (s,x)\in K$ and $t\geq s$.\\
		Whereas for all $(t,x)$ and $(t,y)$ belong to  $K$, one has
		\begin{align*}
			|X_n(t,x)-X_n(t,y)|^2\le |x-y|^2+\int_0^t\left(b_n(X_n(r,x))-b_n(X_n(r,y))\right)\cdot(X_n(r,x)-X_n(r,y))\,\mathrm{d}r\,.
		\end{align*}
		If $b_n$ satisfies $(i)$, then we have
		\begin{align*}
			|X_n(t,x)-X_n(t,y)|^2\le |x-y|^2+C\int_0^t|X_n(r,x)-X_n(r,y)|^2\,\mathrm{d}r\,.
		\end{align*}
		By Gronwall's lemma, we have
		\begin{align*}
			|X_n(t,x)-X_n(t,y)|^2\le |x-y|^2 e^{Ct}\implies |X_n(t,x)-X_n(t,y)|\le C_K|x-y|\,,
		\end{align*}
		hence obtaining Lipschitz bounds for $X_n$.\\
		If $b_n$ satisfies $(ii)$, then
		$$
		\left(b_n(X_n(r,x))-b_n(X_n(r,y))\right)\cdot(X_n(r,x)-X_n(r,y))\le 0\implies  |X_n(t,x)-X_n(t,y)|\le C|x-y|\,,
		$$
		where $C$ is a positive constant.\\
		Then, by combining above estimates, we have
		\begin{equation}\label{stimaX}
			|X_n(t,x)-X_n(s,y)|\le C_K\left(|t-s|+|x-y|\right)\,,\qquad\forall (t,x), \ (s,y)\in K\,.
		\end{equation}
		Concerning the H\"{o}lder bounds for $u_n$, by
		the H\"{o}lder bound on $f_n$, together with the estimate \eqref{stimaX},  we have
		\begin{equation}\label{stimau}
			|u_n(t,x)-u_n(s,y)|\le C_1|X_n(t,x)-X_n(s,y)|^\beta\le C_K\left(|t-s|^\beta+|x-y|^\beta\right)\,.
		\end{equation}
		Therefore, $\{u_n\}_n$ and $\{X_n\}_n$ are uniformly bounded and uniformly equicontinous in all compact sets 
		$K\subset\subset[0,\infty)\times\R$. Using the Ascoli-Arzel\'a's theorem, we obtain 
		$$
		\exists\, u,\, X\quad\mbox{s.t.}\qquad u_n\to u\,,\quad X_n\to X\quad\mbox{locally uniformly}\,.
		$$
		Recalling that \eqref{repru} holds true for $u_n$, and by the uniform convergence of $f_n$, $X_n$ and $u_n$, we obtain that $u$ admits the representation formula \eqref{repru}.
		Therefore, we are left to prove that $u$ is a solution of \eqref{pde}. Indeed, we consider a smooth function $\phi$ such that $\phi-u$ achieves his (strict) minimum in $(t,x)\in(0,+\infty)\times\R^d$.
		Perturbing $\phi$ with a smooth function $\psi$ as follow:
		\begin{itemize}
			\item$ \phi(s,y)=\psi(s,y)$ for any $(s,y)\in B_1(t,x)$;
			\item $\lim\limits_{s\to+\infty}\psi(s,y)=\lim\limits_{|y|\to+\infty}\psi(s,y)=+\infty$;
			\item $\psi-u$ achieves his (strict) minimum in $(t,x)$.
		\end{itemize}
		and recalling that $u_n\rightarrow u$ uniformly, we derive that $\psi-u_n$ admits minimum, i.e.
		$$
		\min\limits_{(0,+\infty)\times\R^d}(\psi-u_n)=\psi(t_n,x_n)-u_n(t_n,x_n)\,.
		$$
		Since $\psi$ is coercive and $u_n$ is bounded, there exists a compact subeset $S$ of $(0,+\infty)\times\R^d$ such that the sequence $\{(t_n,x_n)\}_n\subset S$. Then, up to subsequences, we have that $(t_n,x_n)\to (\bt,\bx)$, for a certain $(\bt,\bx)\in S $.
		But since $(t_n,x_n)$ is the minimum for $\psi-u_n$, we have
		$$
		\psi(s,y)-u_n(s,y)\ge \psi(t_n,x_n)-u_n(t_n,x_n)\,, \forall (s,y)\in (0,+\infty)\times\R^d\,,
		$$
		and passing to the limit for $n\rightarrow +\infty$, we obtain
		$$
		\psi(s,y)-u(s,y)\ge \psi(\bt,\bx)-u(\bt,\bx)\,,
		$$
		meaning that $(\bt,\bx)$ is a global minimum for $\psi-u$, and so $(\bt,\bx)=(t,x)$.
		Using the definition of viscosity subsolution of \eqref{pde} for $u_n$, we have
		$$
		\psi_t(t_n,x_n)-b(x_n)\cdot\nabla\psi(t_n,x_n)\le 0\,.
		$$
		Again, passing to the limit and taking into account that $\psi$ and $\phi$ coincide in $B_1 (t,x)$, we get
		$$
		\phi_t(t,x)-b(x)\cdot\nabla\phi(t,x)\le 0\,,
		$$
		which proves that $u$ is a subsolution of \eqref{pde}. Analogously, we can  show that $u$ is also a supersolution, hence completing  the proof.
	\end{proof}
	
	\subsection{Asymptotic of viscosity solutions}
	In this section we study the asymptotic behaviour of $u$, starting by studying the asymptotic behaviour of $X$.
	\begin{proposition}\label{exX}
		Let $b$ be a continuous function satisfying hypotheses of Theorem \ref{seiscocciante}. Suppose that $X:(0,+\infty)\times \mathbb{R}^d\rightarrow \R^d $ is a solution of \eqref{dyn}, then the following holds true.
		\begin{itemize}
			\item If $b$ satisfies the following
			\begin{equation}\label{eq}
				b(x)\cdot(x-x_0)< 0\,
			\end{equation}
			for some $x_0\in\R^d$ and $x\in\R^d$, then $x_0$ is a globally asymptotically stable equilibrium point. 
			\item If $b$ is such that
			\begin{equation}\label{exphp}
				b(x)\cdot(x-x_0)\le -c|x-x_0|^2\,,
			\end{equation}
			for some positive constant $c$, then $X$ satisfies
			\begin{equation}\label{expdecay}
				|X(t;x)-x_0|\le |x-x_0|e^{-ct}\,  
			\end{equation}
			for all \ $t\in (0,+\infty), x\in \R^d $\,.
		\end{itemize}
	\end{proposition}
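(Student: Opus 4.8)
The plan is to control the squared distance of the trajectory to the candidate equilibrium by means of a Lyapunov function. Since $b$ is bounded, any solution $X(\cdot;x)$ of \eqref{dyn} is defined on all of $[0,+\infty)$ (no finite-time blow-up, as $|X(t;x)|\le|x|+t\|b\|_\infty$), so we may fix such a solution and set
\[
V(t):=|X(t;x)-x_0|^2,\qquad t\ge0 .
\]
Differentiating along the flow and using \eqref{dyn} gives
\[
\dot V(t)=2\big(X(t;x)-x_0\big)\cdot b\big(X(t;x)\big),\qquad t>0 .
\]

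The exponential bound \eqref{expdecay} then follows at once: hypothesis \eqref{exphp} yields $\dot V(t)\le-2c\,V(t)$, so Gronwall's lemma gives $V(t)\le V(0)e^{-2ct}=|x-x_0|^2e^{-2ct}$, and taking square roots proves \eqref{expdecay} for all $t>0$ and all $x\in\R^d$.

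For the first assertion I would argue in three steps. First, $x_0$ is an equilibrium: putting $x=x_0+\varepsilon e$ in \eqref{eq} with $e$ an arbitrary unit vector, dividing by $\varepsilon>0$ and letting $\varepsilon\downarrow0$ gives $b(x_0)\cdot e\le0$; applying this also to $-e$ forces $b(x_0)\cdot e=0$ for every unit $e$, hence $b(x_0)=0$. Second, Lyapunov stability: by \eqref{eq}, $\dot V(t)<0$ as long as $X(t;x)\neq x_0$, so $V$ is non-increasing; thus the orbit stays in $\overline{B}(x_0,|x-x_0|)$, and for any $\varepsilon>0$ every initial point $x$ with $|x-x_0|<\varepsilon$ produces a trajectory with $|X(t;x)-x_0|<\varepsilon$ for all $t\ge0$. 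Third, global attractivity: $V$ is non-increasing and bounded below, hence $V(t)\downarrow L\ge0$; if $L>0$ then the annulus $A:=\{y:\,L\le|y-x_0|^2\le|x-x_0|^2\}$ is compact and avoids $x_0$, so by continuity of $b$ we have $\max_{y\in A}b(y)\cdot(y-x_0)=:-\delta<0$, and since $X(t;x)\in A$ for all $t\ge0$ we would obtain $\dot V(t)\le-2\delta$, forcing $V(t)\to-\infty$, a contradiction. Therefore $L=0$, i.e. $X(t;x)\to x_0$ as $t\to\infty$ for every $x\in\R^d$, which together with the second step is exactly global asymptotic stability of $x_0$.

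The only genuinely delicate point is the last step — excluding that $V$ settles at a positive value. The argument hinges on two facts: the sub-level set $\{V\le V(0)\}$ is compact (so the orbit, and in particular the annulus $A$, are compact), and $V$ is monotone (so once $V$ has descended to the level $L$ the whole forward orbit is trapped in $A$). Granted these, the continuity of $b$ on the compact set $A$ closes the argument; everything else reduces to the Gronwall estimate above.
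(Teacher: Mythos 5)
Your argument is correct, and for the second bullet it is exactly the paper's proof: write $|X(t;x)-x_0|^2$ via the fundamental theorem of calculus (or differentiate it), use \eqref{exphp} to get the differential inequality $\dot V\le -2cV$, apply Gronwall and take square roots to obtain \eqref{expdecay}. For the first bullet the underlying idea is also the same — the Lyapunov function $V(x)=|x-x_0|^2$ — but you and the paper close the argument differently: the paper checks $b(x_0)=0$ by evaluating \eqref{eq} along the coordinate directions and then simply invokes Lyapunov's theorem, noting that $V$ is a strict Lyapunov function, whereas you prove attractivity from scratch: monotonicity of $t\mapsto V(t)$ gives stability and traps the forward orbit in a compact annulus $A$ if the limit $L$ of $V$ were positive, and continuity of $b$ on $A$ yields a uniform negative bound $\max_{y\in A} b(y)\cdot(y-x_0)=-\delta<0$, forcing $V\to-\infty$, a contradiction. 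In effect you reprove the relevant (global) Lyapunov/Barbashin--Krasovskii statement in this special case; this makes the proof self-contained, makes explicit the radial unboundedness needed for \emph{global} asymptotic stability, and your limiting argument for $b(x_0)=0$ (take $x=x_0+\varepsilon e$, divide by $\varepsilon$, let $\varepsilon\downarrow 0$, use $\pm e$) is cleaner than the paper's terse version, at the cost of a longer write-up than the paper's one-line citation.
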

	\begin{proof}
		The proof is a standard application of both Lyapunov's Theorem and Gronwall's lemma.
		Suppose that $b$ satisfies \eqref{eq}, then evaluating \eqref{eq} in $x=x_0+re_i$, $r\in\R$, $i=1,\dots,d\,,$ we get
		$$
		re_i\,b(x_0+re_i)<0\quad\forall r\in\R\implies b(x_0+re_i)=0\quad\forall i\implies b(x_0)=0\,,
		$$
		which implies that $x_0$ is an equilibrium point. Moreover 
		$$
		V(x)=|x-x_0|^2\,,
		$$
		is a strict Lyapunov function, hence  $x_0$ is a globally asymptotically stable equilibrium point.
		Suppose that \eqref{exphp} holds true. Using the fundamental theorem of calculus, we have
		\begin{align*}
			|X(t;x)-x_0|^2&=|x-x_0|^2+2\int_0^t b(X(s;x))\cdot (X(s;x)-x_0)\,ds\\&\le |x-x_0|^2-2c\int_0^t|X(s;x)-x_0|^2\,ds\,,
		\end{align*}
		for some positive constant $c$ and by Gronwall's lemma we get
		\begin{equation*}
			|X(t;x)-x_0|^2\le |x-x_0|^2e^{-2ct} \,,
		\end{equation*}
		implying 
		\begin{equation*}
			|X(t;x)-x_0|\le |x-x_0|e^{-ct}\,.\qedhere
		\end{equation*}
	\end{proof}
	\noindent
	Proposition \ref{exX} implies
	the following convergence result for the function $u$.
	\begin{theorem}\label{t1}
		Suppose that $b$ satisfies the hypotheses of  Proposition \ref{exX} and let $f$ be a function such that the assumptions of Theorem \ref{seiscocciante} hold true. Then, the solution $u$ of \eqref{pde} satisfies
		\begin{equation}\label{convu}
			|u(t,x)-f(x_0)|\le C|x-x_0|^\beta e^{-c\beta t}\, \ \mbox{for all} \  t\in(0,+\infty),\ x\in\R^d\,,
		\end{equation}
		where $c$ and $C$ are positive constant, $0<\beta \leq 1$ and $x_0\in \mathbb{R}^d$. 
	\end{theorem}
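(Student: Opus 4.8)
The plan is to combine the representation formula for the viscosity solution with the quantitative stability estimate for the underlying characteristic flow. By the previous theorem, the unique viscosity solution of \eqref{pde} is $u(t,x)=f(X(t;x))$, where $X(t;x)$ solves the characteristic system \eqref{dyn}. Hence the convergence of $u(t,\cdot)$ toward the constant $f(x_0)$ is governed entirely by the convergence of the trajectory $X(t;x)$ toward the equilibrium $x_0$, read through the modulus of continuity of $f$.

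First I would invoke Proposition \ref{exX}. Since the claimed bound \eqref{convu} carries an exponential rate, the relevant hypothesis is the dissipativity condition \eqref{exphp}, namely $b(x)\cdot(x-x_0)\le -c|x-x_0|^2$; Proposition \ref{exX} then yields the flow estimate $|X(t;x)-x_0|\le|x-x_0|e^{-ct}$ for all $t\in(0,+\infty)$ and $x\in\R^d$. Note also that $x_0$ is forced to be an equilibrium, $b(x_0)=0$, as established in the proof of Proposition \ref{exX}, so that $f(x_0)$ is a legitimate candidate limit and the statement is well posed.

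Next I would use the H\"older assumption on $f$ from Theorem \ref{seiscocciante}: there exist $C>0$ and $0<\beta\le1$ such that $|f(p)-f(q)|\le C|p-q|^\beta$ for all $p,q\in\R^d$. Applying this with $p=X(t;x)$ and $q=x_0$, and then inserting the flow estimate, gives
\[
|u(t,x)-f(x_0)|=|f(X(t;x))-f(x_0)|\le C|X(t;x)-x_0|^\beta\le C|x-x_0|^\beta e^{-c\beta t},
\]
which is exactly \eqref{convu} after relabelling the constants (the exponent becoming $c\beta$ and $C$ absorbing the H\"older constant).

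There is essentially no hard step: once the representation formula and Proposition \ref{exX} are available, the conclusion is a one-line composition. The only points deserving care are (i) making explicit that the exponential rate relies on \eqref{exphp} rather than merely on \eqref{eq}, and (ii) if one assumes only \eqref{eq}, the natural substitute is the weaker pointwise statement $u(t,x)\to f(x_0)$ as $t\to\infty$, without a uniform exponential rate, obtained in the same way from global asymptotic stability together with the continuity of $f$.
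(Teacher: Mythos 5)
Your argument is correct and is essentially identical to the paper's proof: both use the representation formula \eqref{repru}, the exponential flow estimate \eqref{expdecay} from Proposition \ref{exX}, and the H\"older condition on $f$ to conclude in one line. Your added remark that the exponential rate requires \eqref{exphp} rather than just \eqref{eq} is a sensible clarification but does not change the argument.
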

	\begin{proof}
		By the H\"{o}lder assumption on $f$, recalling the representation \eqref{repru} and \eqref{expdecay}, we have
		$$
		|u(t,x)-f(x_0)|=|f(X(t;x))-f(x_0)|\le C|X(t;x)-x_0|^\beta\le C|x-x_0|^\beta e^{-c\beta t}\,,
		$$
		and the proof is complete.
	\end{proof}
	\subsection{Random time viscosity solution}
	Let  $(\Omega,\mathcal{F},\P)$ be a probability space and let $E(\cdot)$ be an inverse subordinator process. Given the solutions $u$, resp. $X$ of \eqref{pde}, resp. \eqref{dyn}, we can consider the  random time dynamic
	$$
	Y(t,\omega;x)=X(E(t,\omega);x)\,,\qquad t\in [0,+\infty), \ x \in\X , \  \omega \in \Omega \,,
	$$
	with corresponding function $v$
	\begin{equation}\label{v}
		v(t,x)=\E[u(E(t),x)]=\E[f(Y(t;x))]\,,
	\end{equation}
	where, without loss of generality, with $E(t)$, resp. $Y(t;x)$, we shortly refer to $E(t,\cdot)$, resp. to $Y(t,\cdot \ ;x)\,.$	\\
	According to what we have already observed  in Section \ref{randomds}, $v$ is the solution to an evolution equation with the same generator $L$.
	
	\subsubsection{General classes of random times}

	In the case of dissipative dynamic, we can easily obtain a general estimate for the function $v$.
	
	\begin{proposition}
		Assume that the assumptions
		of Lemma \ref{KL} and Theorem \ref{t1} are satisfied, with $f(x_0)=0$. Then
		\[
		|v(t,x)|\le C_x\frac{1}{\alpha|v|\Gamma(\gamma)}t^{\gamma-1}Q(t),\quad t\to\infty.
		\]
		
	\end{proposition}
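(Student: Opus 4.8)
The plan is to insert the pointwise exponential decay of $u$ supplied by Theorem~\ref{t1} into the subordination representation of $v$ and then transfer that decay through the inverse subordinator by means of Lemma~\ref{KL}. First I would record that, by definition of $v$, the subordination formula \eqref{SUBFOR} holds here, namely $v(t,x)=\E[u(E(t),x)]=\int_0^\infty u(\tau,x)G_t(\tau)\,\mathrm{d}\tau$, and that since $f(x_0)=0$ the conclusion of Theorem~\ref{t1} reads $|u(\tau,x)|\le C|x-x_0|^\beta e^{-c\beta\tau}$ for every $\tau>0$ and $x\in\R^d$, with $c,C>0$ and $0<\beta\le1$. Using that $\tau\mapsto G_t(\tau)$ is a probability density on $[0,\infty)$, which in particular makes the integral below finite, I would then estimate
\[
|v(t,x)|\le\int_0^\infty|u(\tau,x)|\,G_t(\tau)\,\mathrm{d}\tau\le C|x-x_0|^\beta\int_0^\infty e^{-c\beta\tau}\,G_t(\tau)\,\mathrm{d}\tau=C|x-x_0|^\beta\,A(t,c\beta),
\]
where $A(t,z)$ is precisely the transform appearing in the statement of Lemma~\ref{KL}.

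Since $c\beta>0$, Lemma~\ref{KL} applies with $z=c\beta$ and gives $A(t,c\beta)\sim\frac{1}{c\beta}\frac{t^{\gamma-1}}{\Gamma(\gamma)}Q(t)$ as $t\to\infty$. Combining this with the previous display yields
\[
|v(t,x)|\le C_x\,\frac{t^{\gamma-1}}{\Gamma(\gamma)}\,Q(t),\qquad t\to\infty,
\]
where $C_x$ collects the H\"older constant of $f$, the factor $|x-x_0|^\beta$ and the inverse rate $1/(c\beta)$; one replaces the optimal constant $\frac{C|x-x_0|^\beta}{c\beta}$ by a slightly larger one so as to convert the asymptotic equivalence of Lemma~\ref{KL} into a genuine inequality valid for all $t$ large enough. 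This is the asserted estimate.

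I do not anticipate a real obstacle: the only two points that deserve a line of justification are the integrability of $\tau\mapsto e^{-c\beta\tau}G_t(\tau)$, which is immediate from $\int_0^\infty G_t(\tau)\,\mathrm{d}\tau=1$, and the passage from the asymptotic equivalence $\sim$ in Lemma~\ref{KL} to an inequality for $t$ large, which is immediate from the definition of asymptotic equivalence. The point worth stressing in the text, in parallel with the Proposition of Section~\ref{fex}, is that the exponential-in-time dissipativity of $X$ obtained in Proposition~\ref{exX}, which is inherited by $u$ through Theorem~\ref{t1}, is downgraded after the random time change to the slowly varying polynomial rate $t^{\gamma-1}Q(t)$ that is dictated entirely by the subordinator through condition \eqref{eq:condition-K}.
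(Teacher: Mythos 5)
Your proposal is correct and follows essentially the same route as the paper: bound $|u(\tau,x)|$ by the exponential estimate from Theorem \ref{t1} (with $f(x_0)=0$), insert this into the subordination formula \eqref{SUBFOR}, and conclude via Lemma \ref{KL} applied with $z=c\beta$, absorbing the constants (including the factor $1/(c\beta)$, which plays the role of the $1/(\alpha|v|)$ appearing in the statement) into $C_x$. Your added remarks on integrability of $G_t$ and on converting the asymptotic equivalence into an inequality for large $t$ are just the details the paper leaves implicit.
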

	
	\begin{proof}
		
		From \eqref{convu} we have
		$$
		|u(t,x))|\le C|x-x_0|^\beta e^{-c\beta t}\,.
		$$
		Hence, using (\ref{SUBFOR}) we get
		$$
		|v(t,x)|\le C_x\int_{0}^{\infty}e^{-c\beta t}G_{t}(\tau)\,d\tau\,,
		$$
		and by Lemma \ref{KL} we conclude the proof.
	\end{proof}
	\begin{remark}
		For the $\alpha$-stable subordinator considered in
		Example \ref{exa:alpha-stable1}, we obtain $v(t,x)\sim Ct^{-\alpha}$,
		$C$ is a constant. Therefore, starting with a solution $u(t,x)$
		with exponential decay after subordination we observe a polynomial
		decay with the order defined by the random time characteristics.
	\end{remark}
		In the next Subsection \ref{cpp} we analyse the behavior of the subordination under a Compound Poisson Process (CPP).
	\subsubsection{The case of inverse Poisson process}\label{cpp}
	In the case of a CPP, we can not apply Lemma \ref{KL} since hypotheses \ref{hp} are not satisfied. Therefore, we need to change our approach.
	\begin{theorem}
		Suppose that the assumptions in Proposition \ref{exX} and Theorem \ref{seiscocciante} hold.
		Let $S(\cdot)$ be a CPP with rate $\lambda$. Assume that the jumps $R_i$ have a finite moment of order $\alpha>0$. Then the function $v$ satisfies
		$$
		|v(t,x)-f(x_0)|\le \frac{C}{t^\alpha}|x-x_0|^\beta\,,
		$$
		where $x_0\in\X$,  $C$ is a positive constant and $0<\beta\le 1$. \\
		Moreover, if there exists $\delta>0$ such that $\E\left[e^{\delta R}\right]<+\infty$, then $v$ satisfies
		$$
		|v(t,x)-f(x_0)|\le C|x-x_0|^\beta e^{-\eta t}\,,
		$$ 
		with $\eta >0$.
	\end{theorem}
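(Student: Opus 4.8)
The plan is to exploit the subordination formula \eqref{SUBFOR} together with the exponential decay of $u$ from Theorem \ref{t1} (with $f(x_0)=0$, so that $u(t,x)=u(t,x)-f(x_0)$ satisfies $|u(\tau,x)|\le C|x-x_0|^\beta e^{-c\beta\tau}$), reducing everything to an estimate of the quantity
\[
A(t,z):=\int_0^\infty e^{-z\tau}G_t(\tau)\,\mathrm{d}\tau=\E\bigl[e^{-zE(t)}\bigr],\qquad z=c\beta>0\,,
\]
which is exactly the Laplace transform of the inverse CPP. Since Lemma \ref{KL} is unavailable here (Hypotheses \ref{hp} fail for the CPP), the first task is to get a direct handle on $\E[e^{-zE(t)}]$. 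The natural route is the well-known duality $\{E(t)\ge \tau\}=\{S(\tau)\le t\}$, which gives $\P(E(t)\le \tau)=\P(S(\tau)> t)$ and hence, for any fixed $z>0$, one can either integrate by parts or use the tail bound $\P(E(t)\le \tau)\le \P(S(\tau)\ge t)$ directly. The cleanest version: for a subordinator, $E(t)\to\infty$ a.s., and $\E[e^{-zE(t)}]=z\int_0^\infty e^{-z\tau}\P(E(t)>\tau)\,\mathrm{d}\tau = z\int_0^\infty e^{-z\tau}\P(S(\tau)\le t)\,\mathrm{d}\tau$. Then I would bound $\P(S(\tau)\le t)$ from above.

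For the polynomial bound, the key estimate is a lower bound on $S(\tau)$ in terms of the number of jumps. Since $S(\tau)=\sum_{i=1}^{N(\tau)}R_i$ with $R_i>0$ i.i.d. and $N(\tau)$ Poisson$(\lambda\tau)$, and since the $R_i$ have a finite moment of order $\alpha$, I would use a Markov-type / large-deviations argument: $\P(S(\tau)\le t)$ is small unless $N(\tau)$ is small, i.e. unless $\tau$ is of order at most $t$ (after accounting for the typical jump size). Concretely, $\P(S(\tau)\le t)\le \P(N(\tau)\le M)+\P\bigl(\sum_{i=1}^M R_i \le t \text{ with } N(\tau)>M\bigr)$-type splitting will show $\int_0^\infty e^{-z\tau}\P(S(\tau)\le t)\,\mathrm{d}\tau$ is controlled, and a careful optimization yields a decay of order $t^{-\alpha}$; the $\alpha$-th moment of $R$ enters precisely because $\E[e^{-zE(t)}]\le \E[(C/(1+S_{\lceil\lambda t/2\rceil}))^{\cdots}]$, or more transparently because one compares $E(t)$ with $t$ via the renewal structure and Markov's inequality at exponent $\alpha$: $\P(E(t)>\tau)=\P(S(\tau)\le t)\le \P(\min_{1\le i} (\text{partial sums at time }\tau)\le t)$, and $\tau^\alpha \P(S(\tau)\le t)$ stays bounded. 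I would then substitute into $|v(t,x)|\le C|x-x_0|^\beta\int_0^\infty e^{-c\beta\tau}G_t(\tau)\,\mathrm{d}\tau = C|x-x_0|^\beta\,\E[e^{-c\beta E(t)}]$ and read off $|v(t,x)-f(x_0)|\le C t^{-\alpha}|x-x_0|^\beta$.

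For the second (exponential) assertion, under the stronger hypothesis $\E[e^{\delta R}]<\infty$ for some $\delta>0$, the Laplace exponent $\Phi$ of $S$ extends analytically past $0$ and $\E[e^{\theta S(\tau)}]=e^{\tau\Phi(-\theta)}$ is finite for small $\theta>0$; equivalently the moment generating function $L_R$ is finite on a neighbourhood of $0$, so $S(\tau)$ grows at least linearly with exponentially small error: $\P(S(\tau)\le \kappa\tau)\le e^{-\tau I(\kappa)}$ for a suitable rate $I(\kappa)>0$ and small $\kappa>0$ (Cramér's bound for the CPP). Hence $\P(E(t)>\tau)=\P(S(\tau)\le t)\le e^{-\tau I(t/\tau)}$, which forces $E(t)$ to be essentially $\le t/\kappa$ up to exponentially small probability, giving $\E[e^{-c\beta E(t)}]\le C e^{-\eta t}$ for some $\eta>0$. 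Feeding this into \eqref{SUBFOR} as before yields $|v(t,x)-f(x_0)|\le C|x-x_0|^\beta e^{-\eta t}$.

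The main obstacle I anticipate is making the polynomial decay rate exactly $t^{-\alpha}$ rigorous when $\alpha$ is an arbitrary positive real (not an integer) and the jump distribution is only assumed to have a finite $\alpha$-th moment with no regular-variation hypothesis: one must be careful that the bound on $\P(S(\tau)\le t)$ converts, after the $\tau$-integration against $e^{-z\tau}$, into precisely the claimed power and not something weaker like $t^{-\alpha+\varepsilon}$. I expect this to require a dyadic decomposition of the $\tau$-integral and an application of Markov's inequality to $\E[S(\tau)^{-\alpha}]$ or to the renewal function, together with the elementary fact that $\E[N(\tau)^{-\alpha}]\asymp (\lambda\tau)^{-\alpha}$ for large $\tau$; everything else (inserting Theorem \ref{t1}, Fubini, the a.s. properties of $E$) is routine.
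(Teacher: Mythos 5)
There is a genuine error at the very first step of your tail analysis, and it propagates through the whole plan. The identity you call ``the cleanest version'' is wrong: for a nonnegative random variable one has
\[
\E\bigl[e^{-zE(t)}\bigr]=z\int_0^\infty e^{-z\tau}\,\P\bigl(E(t)\le\tau\bigr)\,\mathrm{d}\tau
= z\int_0^\infty e^{-z\tau}\,\P\bigl(S(\tau)\ge t\bigr)\,\mathrm{d}\tau\,,
\]
whereas what you wrote, $z\int_0^\infty e^{-z\tau}\P(E(t)>\tau)\,\mathrm{d}\tau=z\int_0^\infty e^{-z\tau}\P(S(\tau)\le t)\,\mathrm{d}\tau$, equals $1-\E[e^{-zE(t)}]$. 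A quick sanity check shows your quantity tends to $1$ as $t\to\infty$ (for $\tau$ up to order $t$ the probability $\P(S(\tau)\le t)$ is close to $1$), so no bound on $\P(S(\tau)\le t)$ can ever produce the claimed decay in $t$. Consequently the entire strategy is aimed at the wrong tail: lower deviations of $S$ (Cramér bounds $\P(S(\tau)\le\kappa\tau)\le e^{-\tau I(\kappa)}$, the comparison $E(t)\lesssim t/\kappa$, estimates of $\E[N(\tau)^{-\alpha}]$) control the event that $E(t)$ is \emph{large}, which is harmless since $e^{-c\beta E(t)}\le 1$; showing $E(t)\le t/\kappa$ with high probability gives, if anything, a \emph{lower} bound on $\E[e^{-c\beta E(t)}]$. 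The dangerous event is $E(t)$ \emph{small}, i.e.\ $\{E(t)\le\tau\}=\{S(\tau)\ge t\}$, an overshoot of the subordinator caused by large jumps; this is precisely where the hypotheses on $R$ must enter, and it explains why a finite $\alpha$-moment yields only $t^{-\alpha}$ while $\E[e^{\delta R}]<\infty$ upgrades it to exponential decay. Your anticipated technical difficulty (dyadic decomposition, renewal comparisons to get exactly $t^{-\alpha}$) is an artifact of this wrong orientation.

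For comparison, the paper's proof starts from the same reduction $|v(t,x)-f(x_0)|\le C|x-x_0|^\beta\,\E[e^{-c\beta E(t)}]$, but then uses the layer-cake formula to write $\E[e^{-c\beta E(t)}]=\int_0^1\P\bigl(S(-\tfrac{\ln r}{c\beta})\ge t\bigr)\,\mathrm{d}r$, conditions on the Poisson number of jumps $N$, and integrates the Poisson weights explicitly against the exponential density (a Gamma integral), arriving at
\[
\E\bigl[e^{-c\beta E(t)}\bigr]\le \frac{c\beta}{\lambda+c\beta}\sum_{k\ge1}\P\Bigl(\sum_{i=1}^k R_i\ge t\Bigr)\Bigl(\frac{\lambda}{\lambda+c\beta}\Bigr)^k\,.
\]
Then a single application of Markov's inequality, $\P(\sum_{i=1}^k R_i\ge t)\le t^{-\alpha}\,k^{\alpha+1}\E[R^\alpha]$, gives the $t^{-\alpha}$ bound (the series in $k$ converges geometrically), and the Chernoff bound $\P(\sum_{i=1}^k R_i\ge t)\le e^{-\eta t}\E[e^{\eta R}]^k$ with $\eta<\delta$ chosen so that $\lambda\E[e^{\eta R}]<\lambda+c\beta$ gives the exponential bound. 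If you correct the orientation of your tail estimate (bound $\P(S(\tau)\ge t)$, not $\P(S(\tau)\le t)$), your plan essentially collapses to this argument, and none of the heavier machinery you anticipated is needed.
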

	\begin{proof}
		By Theorem \eqref{t1} we get
		\begin{equation}\label{gabryponte}
			|v(t,x)-f(x_0)|\le C|x-x_0|^\beta\E\left[e^{-c\beta E(t)}\right]\,.
		\end{equation}
		In order to estimate the average term, we can argue as follows:
		\begin{align*}
			\E\left[e^{-c\beta E(t)}\right]=\int_0^{+\infty}\P\left(e^{-c\beta E(t)}\ge r\right)\mathrm{d}r =\int_0^1\P\left(E(t)\le -\frac{\ln r}{c\beta}\right)\mathrm{d}r = \int_0^1\P\left(S\left(-\frac{\ln r}{c\beta}\right)\ge t\right)\mathrm{d}r\,,
		\end{align*}
		since $-\ln r<0$ when $r>1$ and $\P(E(t)\le z)=\P(S(z)\ge t)$ for all $t,z>0\,.$
		To compute the last integral, we  separately study the quantity $\P(S(z)\ge t)$, for $t,z\ge 0$. We have
		\begin{align*}
			\P(S(z)\ge t)=\sum\limits_{k=0}^{+\infty}\P(S(z)\ge t | N(z)=k)\P(N(z)=k)=\mathlarger{\sum\limits_{k=1}^{+\infty}}\,\P\left(\sum\limits_{i=1}^k R_i \ge t\right) e^{-\lambda z}\frac{(\lambda z)^k}{k!} \,,
		\end{align*}
		which allows us to derive 
		\begin{equation}\label{fannobam}\begin{split}
				\E\left[e^{-c\beta E(t)}\right]&=\mathlarger{\int_0^1\sum\limits_{k=1}^{+\infty}}\left[\P\left(\sum\limits_{i=1}^k R_i \ge t\right)\, e^{\frac{\lambda\ln s}{c\beta}}\left(-\frac{\lambda\ln s}{c\beta}\right)^k\,\frac 1{k!}\right]\,ds\\
				&=\mathlarger{\sum\limits_{k=1}^{+\infty}}\left[ \frac 1{k!}\left(\frac{\lambda}{c\beta}\right)^k\P\left(\sum\limits_{i=1}^k R_i \ge t\right)\int_0^{+\infty} e^{-\frac{\lambda+c\beta}{c\beta}y}y^k\,dy \right]\,,
		\end{split}\end{equation}
		where we have used the change of variable $y=-\ln s\,.$ Exploiting the density of a $\Gamma\left(k+1,\frac{\lambda+c\beta}{c\beta}\right)$ function, we know that the latter integral equals
		$$
		\int_0^{+\infty} e^{-\frac{\lambda+c\beta}{c\beta}y}y^k\,dy=k!\left(\frac{c\beta}{\lambda+c\beta}\right)^{k+1}\,.
		$$
		Then, exploiting above estimates, we can rewrite \eqref{gabryponte} as follows
		\begin{equation}\label{gosens}
			|v(t,x)-f(x_0)|\le \frac{Cc\beta}{\lambda+c\beta}|x-x_0|^\beta\,\mathlarger{\sum\limits_{k=1}^{+\infty}}\left[\P\left(\sum\limits_{i=1}^k R_i \ge t\right)\left(\frac{\lambda}{\lambda+c\beta}\right)^k\right]\,.
		\end{equation}
		By assumption on $R_i$, and using Markov's inequality, we get
		\begin{equation}\label{go}
			\P\left(\sum\limits_{i=1}^k R_i \ge t\right)\le \frac{\E\left[\left(\sum\limits_{i=1}^k R_i\right)^\alpha\right]}{t^\alpha}\le \frac{1}{t^\alpha}\,k^{\alpha+1}\E[R^\alpha]\,.
		\end{equation}
		Replacing \eqref{go}  in \eqref{gosens}, and recalling that the series converge, we have
		$$
		|v(t,x)-f(x_0)|\le \frac C{t^\alpha}|x-x_0|^\beta\,\mathlarger{\sum\limits_{k=1}^{+\infty}}\left(k^{\alpha+1}\left(\frac{\lambda}{\lambda+c\beta}\right)^k\right)=\frac{C}{t^\alpha}|x-x_0|^\beta\,,
		$$
		where $C$ is a positive constant.
		Analogously, if $\E[e^{\delta R}]<+\infty$ for a certain $\delta>0$, fixing $0<\eta<\delta$, we can apply again Markov's inequality, with the function $\phi(x)=e^{\eta x}$, to get
		$$
		\P\left(\sum\limits_{i=1}^k R_i \ge t\right)\le e^{-\eta t}\,\E\left[e^{\eta\sum\limits_{i=1}^k R_i} \right]= e^{-\eta t}\,\E\left[e^{\eta R}\right]^k\,,
		$$
		implying that \eqref{gosens} becomes
		$$
		|v(t,x)-f(x_0)|\le Ce^{-\eta t}|x-x_0|^\beta\,\mathlarger{\sum\limits_{k=1}^{+\infty}}\left(\frac{\lambda\, \E[e^{\eta R}]}{\lambda+c\beta}\right)^k=Ce^{-\eta t}|x-x_0|^\beta\,,
		$$
		where we choose $\eta$ such that
		$$
		\frac{\lambda\, \E[e^{\eta R}]}{\lambda+c\beta}<1\,,
		$$
		which completes the proof.
	\end{proof}

	\section{Appendix: proof of Theorem \ref{seiscocciante}}\label{A1}
	
	We note that, with the change of variable $\tilde{u}(t,x)=e^{-\lambda t}u(t,x)$, the system \eqref{pde} is equivalent to the following one
	\begin{equation}\label{secondo}
		\begin{cases}
			u_t(t,x)-b(x)\cdot\nabla u(t,x)+\lambda u(t,x)=0\,,\\
			u(0,x)=f(x)\,.\\
		\end{cases}
	\end{equation}
	\noindent
	therefore, to show the validity of  the comparison principle, we will work with the problem \eqref{secondo}, for a certain $\lambda\gg1$ to be later chosen.	\\
	\noindent
	Let $u$, resp. $v$, be a subsolution, resp.  a supersolution, of \eqref{secondo}.
	Arguing by contradiction, let us assume that there exists $ (s,z)\in[0,+\infty)\times\R^d$ such that $u(s,z)-v(s,z)=\delta>0$. For $\alpha,\nu>0$, we consider
	\begin{equation}\label{quantitya}
		u(t,x)-v(t,y)-\frac\alpha 2|x-y|^2-\frac 1\alpha|x|^2-{\nu}{t}\,.
	\end{equation}
	Due to the coercive term $\frac 1\alpha|x|^2$ and the boundedness of $u$ and $v$, the \eqref{quantitya} achieves a maximum.\\
	We denote the maximum by $M$ and one of its maximum points with $(\bar{t},\bar{x},\bar{y})\in[0,+\infty)\times\R^{2d}$.
	Hence, for $\nu$ sufficiently small and $\alpha$ sufficiently large, we have
	\begin{equation}\label{scepsi}
		u(\bt,\bx)-v(\bt,\by)-\frac\alpha 2|\bx-\by|^2-\frac 1\alpha|\bx|^2-{\nu}{\bt}\ge u(s,z)-v(s,z)-\frac 1\alpha|z|^2-\nu s\ge\delta-\frac\delta 2=\frac\delta 2\,.
	\end{equation}
	Moreover, thanks to the boundedness of $u$ and $v$, it holds
	\begin{equation}\label{diego}
		\frac 1\alpha|\bx|^2+\frac\alpha 2|\bx-\by|^2\le u(\bt,\bx)-v(\bt,\by)\le C\implies \lim\limits_{\alpha\to\infty}|\bx-\by|=\lim\limits_{\alpha\to\infty}\frac{|\bx|}{\alpha}^{1+\theta}\!\!\!\!=0\,. 
	\end{equation}
	\underline{Case 1} If $\bar{t}=0$, one has
	$$
	\frac\delta 2\le u(s,z)-v(s,z)-\frac 1\alpha|z|^2-{\nu}{s}\le u(0,\bx)-v(0,\by)-\frac\alpha 2|\bx-\by|^2-\frac 1\alpha|\bx|^2\,.
	$$
	Since $f$ is H\"{o}lder continuous, $u(0,\cdot)\le f(\cdot)\le v(0,\cdot)$ and $|\bx-\by|\to0$, we have that for $\alpha$ sufficiently large
	$$
	u(0,\bx)-v(0,\by)\le\frac{\delta}{3}\implies u(0,\bx)-v(0,\by)-\frac\alpha 2|\bx-\by|^2-\frac 1\alpha|\bx|^2\le\frac\delta 3\,,
	$$
	which leads to a contradiction. \\
	\underline{Case 2}: Suppose that $\bar{t}$ belongs to $\in(0,+\infty)$. Using \cite[Theorem 8.3.]{crandall} with the following choices
	\begin{align*}
		&u_1(t,x)=u(t,x)\,,\quad u_2(t,y)=-v(t,y)\,,\\ &\phi(t,x,y)=\frac\alpha 2|x-y|^2+\frac 1\alpha|x|^2+{\nu}{t}\,
	\end{align*}
	then there exist $a,c\in \R$ such that $a+c=\nu\,$ and 
	$$
	(a,\nabla_x\phi(\bar{t},\bar{x},\bar{y}))\in\overline{\mathcal{D}^{1,+}u(\bar{t},\bar{x})}\,,\qquad(-c,-\nabla_y\phi(\bar{t},\bar{x},\bar{y}))\in\overline{\mathcal{D}^{1,-}v(\bar{t},\bar{y})}\,.
	$$
	From now on, we will omit the dependence on $(\bar{t},\bar{x},\bar{y})$ for the function $\phi$. Since $u$ is a subsolution, while $v$ is a supersolution, of \eqref{secondo}, then we have
	\begin{align*}
		&a-b(\bx)\cdot \nabla_x\phi+\lambda u(\bar{t},\bar{x})\le 0\,,\\
		-&c+b(\by)\cdot \nabla_y\phi+\lambda v(\bar{t},\bar{y})\ge 0\,.
	\end{align*}
	Subtracting the two inequalities we obtain
	\begin{align}\label{qui}
		\nu+\lambda(u(\bar{t},\bar{x})-v(\bar{t},\bar{y}))\le b(\bx)\cdot\nabla_x\phi+b(\by)\cdot \nabla_y\phi\,.
	\end{align}
	The first term in the left-hand side is non-negative, so we can ignore it. For the second term, using \eqref{scepsi} we get
	\begin{align*}
		\lambda\frac\delta 2+\lambda\frac{\alpha}{2}|\bx-\by|^2\le \,b(\bx)\cdot\nabla_x\phi+b(\by)\cdot \nabla_y\phi\,.
	\end{align*}
	To estimate the right-hand side term, we first compute the derivatives of $\phi$ 
	\begin{align*}
		\nabla_x\phi=\alpha(\bar{x}-\bar{y})+\frac 2\alpha\bx\,,\qquad\nabla_y\phi=-\alpha(\bar{x}-\bar{y})\,,
	\end{align*}
therefore we have
	\begin{align*}
		b(\bx)\cdot\nabla_x\phi + b(\by)\cdot\nabla_y\phi \le\,\alpha(b(\bx)-b(\by))\cdot(\bx-\by)+\frac 2\alpha b(\bx)\cdot\bx\,.
	\end{align*}
	If $b$ satisfies the condition $(i)$, using \eqref{diego} we get
	$$
	\alpha(b(\bx)-b(\by))\cdot(\bx-\by)+\frac 2\alpha b(\bx)\cdot\bx\le C\alpha|x-y|^2+\omega(\alpha)\,,
	$$
	where $\omega(\alpha)$ is a quantity which tend to $0$ when $\alpha\to+\infty$.\\
	On the other hand, if $b$ satisfies the condition $(ii)$ we get
	$$
	\alpha(b(\bx)-b(\by))\cdot (\bx-\by)\le 0\,,\qquad b(\bx)\cdot\bx\le b(0)\cdot\bx\,,
	$$
	hence
	$$
	\alpha(b(\bx)-b(\by))\cdot(\bx-\by)+\frac 2\alpha b(\bx)\cdot\bx\le\omega(\alpha)\,.
	$$
	In both cases we obtain
	$$
	b(\bx)\cdot\nabla_x\phi + b(\by)\cdot\nabla_y\phi\le C\alpha|\bx-\by|^2+\omega(\alpha)\,.
	$$
	Using the above estimates in \eqref{qui}, we get
	$$
	\lambda\frac\delta 2 +\lambda\frac\alpha 2|\bx-\by|^2\le C\alpha|\bx-\by|^2+\omega(\alpha)\,,
	$$
	which again leads to a contradiction for $\alpha$ sufficiently small and $\lambda$ sufficiently large. Since $u$ and $v$ are, respectively, a subsolution and a supersolution of \eqref{secondo} for $\lambda\ge 0$, then $e^{\lambda t}u$, resp.  $e^{\lambda t}v$, is a subsolution, resp. a supersolution, of \eqref{pde}.
	Therefore, $e^{(\lambda-\mu)t}u$, resp. $e^{(\lambda-\mu)t}v$, is a subsolution, resp. a supersolution, of \eqref{secondo} with $\lambda$ replaced by $\mu$. 
	Then, considering $\mu$ large enough we have
	$$
	e^{(\lambda-\mu)t}u\le e^{(\lambda-\mu)t}v\implies u\le v\,,
	$$
	completing the proof.
	
	\subsection*{Acknowledgements}
	
	This work has been partially supported by Center for Research in Mathematics
	and Applications (CIMA) related with the Statistics, Stochastic Processes
	and Applications (SSPA) group, through the grant \\ UIDB/MAT/04674/2020
	of FCT-Funda{\c c\~a}o para a Ci{\^e}ncia e a Tecnologia, Portugal.
	The financial support by the Ministry for Science and Education of
	Ukraine through Project 0119U002583 is gratefully acknowledged.

\end{document}